\date{}
\definecolor{vg}{rgb}{0.0, 0.3, 0.15}
   \newcommand{\beq}{\begin{equation}}
   \newcommand{\eeq}{\end{equation}}
   \newcommand{\beqs}{\arraycolsep1.5pt\begin{eqnarray}}
   \newcommand{\eeqs}{\end{eqnarray}\arraycolsep5pt}
   \newcommand{\beqsn}{\arraycolsep1.5pt\begin{eqnarray*}}
   \newcommand{\eeqsn}{\end{eqnarray*}\arraycolsep5pt}
\def\QED{\hfill $\Box$ }
\newtheorem{pft}{}                              
\newcommand{\bpf}{\begin{pft}\begin{proof}\mbox{\bf Proof}.\ \rm}
\newcommand{\bpfof}[1]{\begin{pft}\begin{proof}{\bf Proof of #1}\
\rm}
\newcommand{\bspf}{\begin{pft}\begin{proof}{\bf Sketch of proof}.\
\rm}
\newcommand{\epf}{\nopagebreak\end{proof}\end{pft}}
\newtheorem{thm}{Theorem}[section]
\newtheorem{rem}[thm]{Remark}
\newtheorem{cor}[thm]{Corollary}
\newtheorem{prop}[thm]{Proposition}
\newtheorem{lemma}[thm]{Lemma}
\newtheorem{defn}[thm]{Definition}
\newtheorem{ex}[thm]{Example}
\numberwithin{equation}{section}
\def \trait (#1) (#2) (#3){\vrule width #1pt height #2pt depth #3pt}
\def \qed{\hfill
        \trait (0.1) (6) (0)
        \trait (6) (0.1) (0)
        \kern-6pt
        \trait (6) (6) (-5.9)
        \trait (0.1) (6) (0)
\medskip}
\def\square{\trait (0.1) (6) (0)
        \trait (6) (0.1) (0)
        \kern-6pt
        \trait (6) (6) (-5.9)
        \trait (0.1) (6) (0)}
\newcommand{\ds}{\rightarrow}
\newcommand{\Om}{\Omega}
\newcommand{\f}{\varphi}
\def\QED{\hbox{\ $\rlap{$\sqcap$}\sqcup$}\bigbreak}
\def\F{{\cal F}}
\def\infess{\mathop{\rm ess\: inf }}
\def\supess{\mathop{\rm ess\: sup }}
\newcommand{\dx}{\,dx}
\newcommand{\wto}{\rightharpoonup}
\newcommand{\loc}{{\rm loc}}
\newcommand{\ainc}[1]{\hyperref[defn:aInc]{{\normalfont(aInc){\ensuremath{_{#1}}}}}}
\newcommand{\adec}[1]{\hyperref[defn:aDec]{{\normalfont(aDec){\ensuremath{_{#1}}}}}}
\newcommand{\inc}[1]{\hyperref[defn:Inc]{{\normalfont(Inc){\ensuremath{_{#1}}}}}}
\newcommand{\dec}[1]{\hyperref[defn:Dec]{{\normalfont(Dec){\ensuremath{_{#1}}}}}}
\newcommand{\azero}{\hyperref[def:a0]{{\normalfont(A0)}}}
\def\supess{\mathop{\rm ess\: sup }}
\font\tenmsb=msbm10 \font\sevenmsb=msbm7 \font\fivemsb=msbm5
\newcommand {\Me}{{\cal M}}
\def\to{\rightarrow}
\def\esssup{\mbox{ess sup}}
\def\essinf{\mbox{ess inf}}
\def\d{\, d}
\newenvironment{giacomorev}{\color{blue}}{\color{black}}
\newcommand{\bg}{\begin{giacomorev}}
\newcommand{\eg}{\end{giacomorev}}
\title{Approximation of $L^\infty$  functionals with generalized Orlicz norms}
\author{Giacomo Bertazzoni, Michela Eleuteri, Elvira Zappale}
\date{}
\begin{document}

\maketitle

\begin{abstract}
The aim of this paper is to deal with the asymptotics of generalized Orlicz norms when the lower growth rate tends to infinity. 
$\Gamma$-convergence results and related representation theorems in terms of $L^\infty$ functionals are proven for sequences of generalized Orlicz energies under mild convexity assumptions. 
This latter hypothesis is removed in the variable exponent setting. 

\noindent\textsc{MSC (2020):} 26B25; 49J45, 46E30, 46N10.

\noindent\textsc{Keywords: generalized convexity, supremal functionals, $L^p$-type approximation, $L^\infty$- variational problems, Musielak-Orlicz spaces.} 
 
\vspace{8pt}
\end{abstract}

\section{Introduction}

In recent years, supremal functionals, i.e. \begin{equation}\label{Fudef}
F(u) := \supess_{x \in \Omega} f(x, u(x), Du(x)),
\end{equation}
with $\Omega \subset \mathbb R^N $ open, $u \in W^{1,\infty}(\Omega; \mathbb R^d)$, and $f : \Omega \times \mathbb R^d \times \mathbb R^{Nd} \rightarrow \mathbb R$ a suitable Borel function,
 have being object of growing interest in view of the many applications. Indeed many variational models are expressed by means of
 energies which are not of integral form since the relevant quantities entering in the modeling
 do not express a mean property.

As a matter of fact, minimization of $L^\infty$ functionals enters in the description of yield set in plasticity, in optimal design, in the description of the dielectric breakdown (see e.g. \cite{BN, CasMS98, GarNP01}), in optical tomography (\cite{K}), in connection with the infinite laplacian as in the seminal papers \cite{A1, A2, A3, A4} and recently in \cite{AK, CKM, RZ0}. Indeed functionals as in \eqref{Fudef} can be seen as the counterpart of integral functionals, which give rise
to the Euler-Lagrange (Euler-Aronsson in this setting) equations (or systems) of $\Delta_{\infty}$-type.
On the other hand, for minimization problems involving \eqref{Fudef}, it is important to
consider the pointwise behavior of the energy density also on very small sets 
 as long as they appear in a very natural way in variational problems, where the  relevant quantities do not express a mean property and the values of the energy densities  on  very small subsets of $\Omega$ cannot be neglected. 

Motivated both by the above mentioned applications of power-law approximation, and by the fact that $L^\infty$-norm of a measurable function is the limit, as $p \to +\infty$ of its $L^p$-norms,
many authors (see e.g. \cite{AP, BMP, BN, BGP, CDP, EP, DEZ, P, PZ}) studied the variational convergence of the energies 
\begin{equation}
\label{Ipdef}
I_p(u) := \left(\int_\Omega f(x, u, Du)^p\, dx\right)^{1/p}
\end{equation}
as $p\to \infty$, obtaining a ($\Gamma$-)limit of supremal type described in terms of a suitable limiting density, not necessarily coinciding with the original one $f$.
More precisely, the limiting density satisfies suitable convexity properties, for instance in the scalar case (i.e. $d$ or $N=1$) it is level convex in the last variable, see Section \ref{conv} for the precise definition.
In particular it coincides with the original one if $f(x,u,\cdot)$ in \eqref{Ipdef}, is convex, quasiconvex in the sense of Morrey, or level convex. 

Still under level convexity assumptions, the energy $I_p$ has been generalized to two complementary cases: 
Orlicz growth by Bocea and Mih\u{a}ilescu \cite{BM2} and variable exponent growth 
by Eleuteri and Prinari \cite{EP}. Moreover in the paper \cite{BHH}, the authors consider 
the same problem in the context of generalized Orlicz spaces, which covers both of these 
as special cases. Generalized Orlicz spaces (also known as Musielak--Orlicz spaces) 
and related PDE have been intensely studied recently, see, e.g., 
\cite{ChlGSW21, HarHL21, HasO22a, HurOS23, JiaWYYZ23} and references 
therein. In fact, both the $L^\infty$- and the non-standard growth energies are related to 
image processing \cite{CasMS98, CheLR06, HarH21}.

 The main idea in \cite{BHH} consists of  generalizing the approach of previous papers by means of improved techniques, with streamlined proofs and weaker assumptions. In particular, assumptions about the lower and upper growth-rates (denoted 
$\phi_n^\pm$ and $p_n^\pm$ in \cite{BM2, EP}) of the approximating functionals are not needed, differently from \cite{BM} and the just mentioned literature. In addition, in \cite{BHH} the authors are able to deal with open sets of finite measure, without requiring boundedness of the sets and  regularity of their boundaries.

In the present paper we aim at weakening the convexity assumptions on the density $f$.
In details, given $\phi\in\Phi_w(\Omega),$ (see Section \ref{tre} for more details on the class $\Phi_w$) and a normal integrand $f: \Omega \times \mathbb R^d \times \mathbb R^{Nd} \to \mathbb R$, 
we define, as in \cite{BHH}, $F_\phi, E_\phi, F_\infty, E_\infty: L^1(\Omega, \mathbb R^d) \to [0,\infty]$  by 
\begin{equation}\label{Fphi}
F_\phi(u) := \begin{cases} 
\|f(\cdot, u, Du)\|_\phi &\text{if } u \in W^{1,1}_\loc(\Omega, \mathbb R^d), \\ 
\infty &\text{otherwise},
\end{cases}
\end{equation}
\begin{equation}\label{Ephi}
E_\phi(u) := \begin{cases} 
\rho_\phi(f(\cdot, u, Du)) &\text{if } u \in W^{1,1}_\loc(\Omega, \mathbb R^d), \\ 
\infty &\text{otherwise},
\end{cases}
\end{equation}
\begin{equation}
\label{Finfinity}
F_\infty(u) = 
\begin{cases}
\|f(\cdot, u, Du)\|_{\infty} &\text{if } u \in W^{1,1}_\loc(\Omega, \mathbb R^d), \\ 
\infty &\text{otherwise}.
\end{cases}  
\end{equation}
and
\begin{equation}
\label{Einfinity}
  E_\infty(u) = 
\begin{cases}
0, &\text{if } u \in W^{1,1}_\loc(\Omega, \mathbb R^d) \text{ and }|f(\cdot, u, Du)|\le 1 \text{ a.e.}, \\
\infty, &\text{otherwise}
\end{cases}  
\end{equation}

More precisely,  we assume that $f$
satisfies the following assumptions:
\begin{enumerate} 
\item [{\bf (H1)}]   $f(x,u,\cdot)$ is ${\rm curl}_{(p > 1)} - Young$ quasiconvex for a.e. $x\in \Omega$ and  for every $u\in \mathbb R^d$, i.e. 
    \[ \supess_{y \in Q}f\left(x, u(x),\int_{\mathbb R^{Nd}} \xi d\nu_y(\xi)\right) \le \supess_{y \in Q} \left(\nu_y - \supess_{\xi \in \mathbb R^{Nd}} f(x, u(x),\xi)\right) 
    \]
    whenever $\nu \equiv \{\nu_y\}_{y \in Q}$ is a $W^{1,p}-$gradient Young measure for every $p \in (1, \infty)$, where $Q$ is the unit cube of $\mathbb R^n$ centered in the origin with side length $1$, (see Section \ref{pre} for details about Young measures and convexity notions);
\item  [{\bf (H2)}] there exist $\alpha, \gamma>0$ such that
\[\ 
f(x,u, \xi) \geq  \alpha |\xi|^{\gamma}  
\qquad \hbox{ for a.e } x\in \Om \hbox{ and for every }
(u, \xi)\in\mathbb R^d\times  \mathbb R^{Nd}.
\]
\end{enumerate} 

We stress that {\bf (H1)} is a milder condition than
 level convexity (i.e. the convexity of the sublevel sets) of $f(x,u,\cdot)$ adopted to prove the results in \cite{EP, BHH}. Indeed condition {\bf(H1)} has been first introduced in \cite[eq. (3.1)]{CDP} to prove $L^p$-approximation for $L^\infty$-type norm functionals, and later characterized in \cite{RZ}, see also \cite{AP, AP2} for similar related notions.
 
 Assuming also that for every $n, \phi_n \in \Phi_w(\Omega)$, satisfies \ainc{{p_n}} with constant $L$ for some $p_n\ge 1$, i.e. \begin{itemize}
\item[{\bf (H3)}]
 there exists $L\ge 1$ such that $\phi_n(x,\lambda t)\le L \lambda^{p_n} \phi_n(x,t)$ 
for a.e.\ $x\in \Omega$ and all $t\ge 0$, $\lambda \le 1$, 
\end{itemize}
and there exists $c >0$ such that 
\begin{itemize}
\item[{\bf (H4)}] $\frac1c \le \phi_n(x,1) \le c$ for a.e.\ $x \in \Omega$,
\end{itemize}
then, letting $F_n:=F_{\phi_n}, E_n:=E_{\phi_n}: L^1(\Omega;\mathbb R^d)\to [0,+\infty]$ be defined by \eqref{Fphi} and \eqref{Ephi}, respectively (with $\phi= \phi_n$) 
we prove in Theorems~\ref{thm:main-norm} that $(F_n)$ sequentially $\Gamma$-converge to $F_\infty$ with respect to the $L^1$-weak topology, as $p_n\to +\infty.$ 
We observe also that if $\Omega$ is bounded with regular boundary, then by Rellich theorem, and ${\bf (H2)}$, guarantee that the weak $L^1$ convergence could be replaced by the strong one and the notion of $\Gamma$-convergence coincides with the one in Proposition \ref{seqcharac}.

\noindent Reinforcing {\bf(H4)} by  
\begin{itemize}
\item[{\bf (H5)}] $\displaystyle \limsup_{n \to \infty} \phi_n^+(1)= 0$ and $\displaystyle \liminf_{n \to \infty} \phi_n^-(1)^{1/p_n}\ge 1,$ 
\end{itemize} 
where $\displaystyle \phi_n^{+}(t):=\supess_{x \in \Omega} \phi_n(x,t)$ and  $\displaystyle \phi_n^{-}(t):={\rm ess} \inf_{x \in \Omega} \phi_n(x, t)$,
\color{black}in Theorem \ref{thm:main-modular} 
we show the (sequential) $\Gamma$-convergence of $(E_n)$  towards $E_\infty$ with respect to the $L^1$-topology.

On the other hand, in Theorems \ref{relnorm1} we are able to remove assumption $\mathbf{(H1)}$, i.e. the ${\rm curl}_{(p >1)}$- Young quasiconvexity assumption on $f:\Omega \times \mathbb R^{Nd}\to \mathbb R$, in its last variable, under the restriction that $f(x,\cdot)$ satisfies a growth condition  of type $|\cdot|^\gamma$, also from above, 
and assuming that $\phi_n(x,t):= t^{p_n(x)}$, with $\sup_{n}\frac{p_n^+}{p_n^-} \leq \beta$, for a $\beta >0$, where $p^+_n = \supess_{x \in \Omega} p(x),\,p^-_n := {\rm ess} \inf_{x \in \Omega} p_n(x)$.

\noindent The result we get gives an extension of \cite[Theorem 2.2]{PZ}, to the case of modulars allowing also to consider $p(\cdot)$-type Sobolev spaces and more general growth conditions. We would like to underline that, differently from \cite{PZ, DEZ}, we deal with Carath\'eodory integrands instead of measurable ones, since the available relaxation results in $W^{1,p}$ and $W^{1,p(\cdot)}$ (crucial for our proofs) for integral functionals with only measurable integrands  (see \cite{Buttazzo, D} and \cite{MM}, respectively)  provide an integral representation in terms of a generic quasiconvex function, which  in principle, in the case $p$ constant, may differ from the one obtained in the case $p$ variable. Hence no comparison among the two densities obtained relaxing $F_{\phi}$ either in $W^{1,p}$ ($\phi(x,\xi)=|\xi|^p$) or in $W^{1, p(\cdot)}$ ($\phi(x,\xi)= |\xi|^{p(\cdot)}$) would be possible. In contrast, in our arguments to obtain a supremal representation by means $L^{p(\cdot)}$-approximation,  we rely in the case $p$ constant, passing from $p(\cdot)$ to a suitable constant exponent $p$.
On the other hand, in this setting, due to our techniques, we have to require a certain regularity on $\Omega$.
\color{black}
\\The paper is organized as follows.
In Section \ref{tre} we recall generalized Orlicz spaces, while results related to useful tools such as Young measures and $\Gamma$-convergence are given in Section \ref{pre}. In Section~ \ref{Lp}, we derive supremal functionals as limits of the generalized Orlicz norms under mild convexity assumptions. We remove in Section \ref{pxgamma} this latter assumption in the case where the Orlicz spaces reduce to Lebesgue spaces with variable exponents.

\smallskip

\section{Generalized Orlicz spaces}\label{tre}

For the purpose of our paper, we consider the case when  $\Omega \subset \mathbb{R}^N$ is  an open set (where $N\geq 1$) and denote by $\mathcal{L}^N(U)$ the $N$-dimensional Lebesgue measure of any measurable set $U$.  In the sequel we consider functions $u: \Omega \rightarrow \mathbb{R}^d$, with $d \ge 1$ and we denote by $k$ any dimension different from $Nd$. \\
In this section, we provide a background on generalized Orlicz spaces. 
For more details, see \cite{ChlGSW21, HarH19}.

\begin{defn}\label{defn:aInc}
Let $f: \Omega\times [0,\infty) \to \mathbb R$ and $p > 0$. We say that $f$ satisfies 
\ainc{p} if there exists $L\ge 1$ such that $f(x,\lambda t)\le L \lambda^p f(x,t)$ 
for a.e.\ $x\in \Omega$ and all $t\ge 0$, $\lambda \le 1$.
\end{defn}

Note that if $\phi$ satisfies \ainc{p_1}, then it satisfies \ainc{p_2}, for every $p_2 < p_1$. 

\begin{defn}
We say that $\phi: \Omega \times [0,\infty) \to [0,\infty]$ is a 
{\emph generalized weak $\Phi$-function} and write $\phi \in \Phi_w(\Omega)$ if 
\begin{itemize}
\item
$x \mapsto \phi(x, |f(x)|)$ is measurable for every $f \in L^0(\Omega)$,
\item
$\phi(x, 0) = 0$, $\displaystyle \lim_{t \to 0^+} \phi(x, t) = 0$ and $\displaystyle \lim_{t \to \infty} \phi(x, t) = \infty$ 
for a.e. $x \in \Omega$, 
\item
$t \mapsto \phi(x, t)$ is increasing for a.e. $x \in \Omega$, 
\item
$\phi$ satisfies \ainc{1}.
\end{itemize}
If $\phi$ does not depend on $x$, then we omit the set and write $\phi \in \Phi_w$.
\end{defn}

We can now define generalized Orlicz spaces. Example~\ref{eg:Linfty} shows that this framework covers 
also $L^\infty$-spaces without the need for special cases. This is crucial here  as we consider the limit when the growth-rate tends to infinity. 

\begin{defn}
Let $\phi \in \Phi_w(\Omega)$ and let the modular $\rho_{\phi}$ be given by
\[ 
\rho_{\phi}(f) := \int_{\Omega} \phi(x, |f(x)|)\,dx
\]
for $f \in L^0(\Omega)$. The set
\[ 
L^{\phi}(\Omega) := \{ f \in L^0(\Omega) : \rho_{\phi}(\lambda f) < \infty \text{ for some } \lambda > 0  \}
\]
is called a {\emph generalized Orlicz space}. It is equipped with the Luxenburg quasinorm
\[ 
\|f\|_\phi 
:= 
\inf\Big\{ \lambda > 0 : \rho_{\phi}\Big(\frac{f}{\lambda}\Big) \le 1\Big\}.
\]
\end{defn} 


\begin{ex}
   \label{eg:Linfty}
Define $\phi_\infty\in \Phi_w(\Omega)$ by $\phi_\infty(x, t):=\infty \chi_{(1,\infty)}(t)$. 
Then $\phi_\infty$ is a generalized weak $\Phi$-function. From the definition 
of modular we see that 
\[
\rho_{\phi_\infty}(f) = 
\begin{cases}
0, &\text{if } |f|\le 1 \text{ a.e.} \\
\infty, &\text{otherwise}.
\end{cases}
\]
It follows from the Luxemburg norming procedure that $\|\cdot\|_{\phi_\infty}=\|\cdot\|_\infty$
and so $L^{\phi_\infty}(\Omega)=L^\infty(\Omega)$.  
\end{ex}

Other examples of generalized Orlicz spaces are (ordinary) Orlicz spaces where $\phi(x,t)$ is independent of $x$, variable exponent spaces $L^{p(x)}$ where $\phi(x, t) = t^{p(x)}$ \cite{DHHR11},  double phase spaces $\phi(x, t) = t^p + a(x) t^q$ \cite{BarCM18, GasP23}, variable exponent double phase spaces $\phi(x, t)= t^{p(x)} + a(x) t^{q(x)}$  \cite{CBGHW22} and many other variants (e.g.\ \cite{BaaB22}). 
The Orlicz--Sobolev space is defined based on $L^\phi(\Omega)$ as usual: 

\begin{defn}
Let $\phi \in \Phi_w(\Omega)$. The function $u \in L^{\phi}(\Omega) \cap W^{1,1}_\loc(\Omega)$ belongs to the {\emph Orlicz--Sobolev space $W^{1,\phi}(\Omega)$}, if the weak partial derivatives $\frac{\partial u}{\partial x_i}$, $i = 1, \dots, N$, belong to $L^{\phi}(\Omega)$. We define a modular and quasinorm on $W^{1, \phi}(\Omega)$ by
\[ 
\rho_{1, \phi}(u): = \rho_\phi(u) + \sum_{i = 1}^N \rho_\phi\bigg(\frac{\partial u}{\partial x_i}\bigg) 
\quad\text{and}\quad
\|u\|_{1,\phi} := \inf\Big\{ \lambda > 0 : \rho_{1,\phi} \Big(\frac{u}{\lambda}\Big) \le 1\Big\}.
\]
\end{defn}

\subsection{Some technical lemmas and crucial examples}
In this subsection we present some useful results needed for the main proofs. For more details the reader can refer to \cite{BHH}. 
The unit ball property is a fundamental relation between quasinorm and modular. 

\begin{lemma}\label{lemma:Unit-ball}
{\rm [Unit ball property, see \cite[Lemma 3.2.3]{HarH19}]}
If $\phi \in \Phi_w(\Omega)$, then
\[ 
\|f\|_{\phi} < 1 
\quad\Rightarrow\quad 
\rho_{\phi}(f) \le 1 
\quad\Rightarrow\quad 
\|f\|_{\phi} \le 1.
\]
\end{lemma}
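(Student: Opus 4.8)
The plan is to establish the two implications separately, working straight from the definitions of the modular $\rho_\phi$ and the Luxemburg quasinorm, and relying only on the monotonicity of $t\mapsto\phi(x,t)$ built into the definition of $\Phi_w(\Omega)$; in particular neither continuity nor any convexity of $\phi$ in its second argument is required.

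The second implication, $\rho_\phi(f)\le 1\Rightarrow \|f\|_\phi\le 1$, is immediate: the hypothesis says precisely that $\lambda=1$ belongs to the set $\{\lambda>0:\rho_\phi(f/\lambda)\le1\}$ over which the infimum defining $\|f\|_\phi$ is taken, so $\|f\|_\phi\le 1$ at once and there is nothing more to do.

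For the first implication, $\|f\|_\phi<1\Rightarrow\rho_\phi(f)\le1$, the one point that needs care is that the infimum defining $\|f\|_\phi$ need not be attained, so one cannot simply substitute $\lambda=\|f\|_\phi$ into $\rho_\phi$. Instead, since $\|f\|_\phi<1$, I would apply the definition of infimum with $\varepsilon:=1-\|f\|_\phi>0$ to obtain a number $\lambda$ with $\|f\|_\phi\le\lambda<1$ and $\rho_\phi(f/\lambda)\le1$. Because $\lambda<1$ we have $|f(x)|\le|f(x)|/\lambda$ for a.e.\ $x\in\Omega$, and since $t\mapsto\phi(x,t)$ is increasing for a.e.\ $x$, this yields the pointwise a.e.\ inequality $\phi(x,|f(x)|)\le\phi(x,|f(x)|/\lambda)$. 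Integrating over $\Omega$ then gives $\rho_\phi(f)\le\rho_\phi(f/\lambda)\le1$, which is the claim.

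The argument is elementary and I do not anticipate a real obstacle; the only subtlety is the one just flagged, namely extracting an admissible $\lambda$ strictly below $1$ from the strict inequality $\|f\|_\phi<1$ rather than naively plugging in the infimum. It may also be worth noting in passing that the same monotonicity shows $\lambda\mapsto\rho_\phi(f/\lambda)$ is non-increasing, which explains why the admissible set is essentially a half-line and hence why these are exactly the natural implications to expect.
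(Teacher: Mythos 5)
Your argument is correct: the paper itself gives no proof of this lemma, merely citing \cite[Lemma 3.2.3]{HarH19}, and your two-step argument (taking $\lambda=1$ as an admissible competitor for one direction, and extracting an admissible $\lambda<1$ from the strict inequality plus monotonicity of $\phi(x,\cdot)$ for the other) is exactly the standard proof given in that reference. The one subtlety you flagged — that the infimum need not be attained, so one must pick an admissible $\lambda$ strictly below $1$ rather than plugging in $\|f\|_\phi$ itself — is indeed the only point requiring care, and you handle it properly.
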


This next property anchors $\phi$ at $1$. 

\begin{defn}
We say that a  function $\phi \in \Phi_w(\Omega)$ satisfies \azero \, if there exists $\beta \in (0,1]$ such that $\phi(x,\beta) \le 1 \le \phi(x, \frac{1}{\beta})$ for a.e.\ $x \in \Omega$.
\end{defn}
It follows from \ainc{1}  that if $\frac1c \le \phi(x, 1) \le c$ for a.e.\ $x \in \Omega$, 
then $\phi$ satisfies \azero \,  with $\beta := \frac1{Lc}$.\\
The following proposition plays a significant role in reducing our main problem to 
the classic Lebesgue space $L^q(\Omega)$, where we can use the results about Young measures previously presented.
The embedding is well-known (see \cite{HarH19}) but we need a version with an asymptotically sharp constant as $p\to\infty$.

\begin{prop}
\label{prop:embedding}
Let $\Omega$ have finite measure, $\phi\in \Phi_w(\Omega)$ and $c, L\ge 1$. Assume  that 
\begin{itemize}
\item $\frac1c \le \phi(x,1)\le c$ 
for a.e.\ $x \in \Omega$;
\item $\phi$ satisfies \ainc{p} with constant $L$ for some $p\ge 1$.
\end{itemize}
Then $L^{\phi}(\Omega) \hookrightarrow L^p(\Omega)$ and 
\begin{equation}
\label{Prop3.8}
\|\cdot\|_{L^p} \le (2L(|\Omega|+c))^\frac1p \|\cdot\|_{\phi}.
\end{equation}
\end{prop}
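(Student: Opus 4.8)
The plan is to prove the embedding $L^\phi(\Omega)\hookrightarrow L^p(\Omega)$ with the asymptotically sharp constant by means of the unit ball property (Lemma~\ref{lemma:Unit-ball}), combined with a pointwise estimate comparing $t^p$ with $\phi(x,t)$ that exploits both \ainc{p} and the two-sided bound on $\phi(x,1)$. First I would fix $f\in L^\phi(\Omega)$, set $\lambda:=\|f\|_\phi$ and, after rescaling, reduce to the normalized situation $\|g\|_\phi\le 1$ where $g:=f/\lambda$ (the degenerate cases $\lambda=0$ and $\lambda=\infty$ being trivial); by Lemma~\ref{lemma:Unit-ball} this gives $\rho_\phi(g)=\int_\Omega \phi(x,|g(x)|)\,dx\le 1$. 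The goal then becomes to bound $\int_\Omega |g(x)|^p\,dx$ by $2L(|\Omega|+c)$, because taking $p$-th roots and undoing the rescaling yields exactly \eqref{Prop3.8}.

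The key pointwise inequality is: for a.e.\ $x\in\Omega$ and all $t\ge 0$,
\[
t^p \le L(|\Omega|+c)\bigl(1+\phi(x,t)\bigr),
\]
or something in this spirit adequate to close the argument. To see why such an estimate holds, I would split into $t\le 1$ and $t> 1$. For $t\le 1$ one simply has $t^p\le 1$. For $t>1$, write $t = \frac{1}{\beta}\cdot(\beta t)$ with $\beta t$ possibly still large; the cleaner route is to apply \ainc{p} in the form $\phi(x,1)=\phi(x,\tfrac1t\cdot t)\le L t^{-p}\phi(x,t)$ (valid since $\tfrac1t\le 1$), which rearranges to $t^p\le L\,\frac{\phi(x,t)}{\phi(x,1)}\le Lc\,\phi(x,t)$ using $\phi(x,1)\ge \tfrac1c$. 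Combining the two regimes gives $t^p\le \max\{1,Lc\,\phi(x,t)\}\le 1+Lc\,\phi(x,t)$ for all $t\ge 0$. Integrating over $\Omega$ against $|g|$ in place of $t$ and using $\rho_\phi(g)\le 1$ then produces $\int_\Omega|g|^p\,dx\le |\Omega| + Lc \le L(|\Omega|+c)\le 2L(|\Omega|+c)$ (the factor $2$ giving a harmless margin, and absorbing $L\ge1$ where needed), which is the required modular bound.

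From there the conclusion is immediate: $\|g\|_{L^p}\le (2L(|\Omega|+c))^{1/p}$, hence $\|f\|_{L^p}=\lambda\|g\|_{L^p}\le (2L(|\Omega|+c))^{1/p}\|f\|_\phi$, and in particular $f\in L^p(\Omega)$, so $L^\phi(\Omega)\hookrightarrow L^p(\Omega)$ with the stated constant. I expect the only delicate point to be the bookkeeping in the pointwise inequality — making sure the constants are arranged so that the final bound is genuinely $(2L(|\Omega|+c))^{1/p}$ and, more importantly, that this constant tends to $1$ as $p\to\infty$, which is the whole reason for tracking it explicitly rather than invoking the known qualitative embedding. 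One should double-check that \ainc{1} (built into membership in $\Phi_w(\Omega)$) is not secretly needed beyond what \ainc{p} already provides, and that the case distinction $t\le1$ versus $t>1$ does not require any continuity of $\phi$ in $t$ that we do not have; monotonicity of $t\mapsto\phi(x,t)$ together with \ainc{p} should suffice.
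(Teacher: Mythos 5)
Your argument is correct and is essentially the standard proof of this embedding (the paper itself omits the proof, deferring to \cite{BHH}, where the same strategy is used: normalize, apply the unit ball property, and derive a pointwise comparison $t^p\le 1+Lc\,\phi(x,t)$ by splitting at $t=1$ and applying \ainc{p} with $\lambda=1/t$ together with $\phi(x,1)\ge \tfrac1c$). The constant bookkeeping checks out: $|\Omega|+Lc\le L(|\Omega|+c)\le 2L(|\Omega|+c)$ since $L\ge1$, and only monotonicity plus \ainc{p} are used, as you suspected.

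One small technical point to tighten: the unit ball property as stated gives $\rho_\phi(g)\le1$ only from the \emph{strict} inequality $\|g\|_\phi<1$; with $g=f/\|f\|_\phi$ you only have $\|g\|_\phi\le1$, and the infimum in the Luxemburg norm need not be attained for a general $\phi\in\Phi_w(\Omega)$ (no left-continuity is assumed). The standard fix is to work with $g_\varepsilon:=f/(\|f\|_\phi+\varepsilon)$, which satisfies $\|g_\varepsilon\|_\phi<1$, run your estimate to get $\|f\|_{L^p}\le (2L(|\Omega|+c))^{1/p}(\|f\|_\phi+\varepsilon)$, and let $\varepsilon\to0^+$. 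With that adjustment the proof is complete.
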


The next proposition, (cf. \cite[Proposition 3.4]{BHH} also for the proof) allows us to prove the  $\limsup$-part of the estimate in 
the main results. The almost increasing assumption is satisfied (with $L=1$) 
for example when $\phi_n^{1/p_n}$ is convex.

\begin{prop}
\label{prop}
Let $\Omega$ have finite measure, 
$\phi_n\in \Phi_w(\Omega)$ for $n\in\mathbb N$ and $c, L\ge 1$. Assume for $n\in\mathbb N$ that 
\begin{itemize}
\item $\frac1c \le \phi_n(x,1) \le c$ for a.e.\ $x \in \Omega$;
\item $\phi_n$ satisfies \ainc{p_n} with constant $L$ for some $p_n\ge 1$.
\end{itemize}
If $p_n\to \infty$, then $\displaystyle \lim_{n \to \infty} \|u\|_{\phi_n} = \|u\|_{\infty}$ for all $u \in L^{\infty}(\Omega)$.
\end{prop}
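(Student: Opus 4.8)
The plan is to prove the two inequalities $\limsup_n \|u\|_{\phi_n} \le \|u\|_\infty$ and $\liminf_n \|u\|_{\phi_n} \ge \|u\|_\infty$ separately, for a fixed $u \in L^\infty(\Omega)$. We may clearly assume $u \not\equiv 0$, and by homogeneity of both the Luxemburg quasinorm and $\|\cdot\|_\infty$ (the quasinorm is positively homogeneous directly from its definition), it suffices to treat the case $\|u\|_\infty = 1$; in fact it is cleanest to prove: if $\|u\|_\infty \le 1$ then $\limsup_n \|u\|_{\phi_n} \le 1$, and if $\|u\|_\infty > 1$ then $\liminf_n \|u\|_{\phi_n} \ge \|u\|_\infty$, and then rescale.

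\textbf{The $\limsup$ inequality.} Assume $\|u\|_\infty \le 1$, so $|u(x)| \le 1$ a.e. Fix $\lambda > 1$. Using \ainc{p_n} with constant $L$ and $\frac{1}{\lambda} \le 1$ we get, for a.e.\ $x$,
\[
\phi_n\Big(x, \frac{|u(x)|}{\lambda}\Big) \le L \lambda^{-p_n} \phi_n(x, |u(x)|) \le L \lambda^{-p_n} \phi_n(x,1) \le L c\, \lambda^{-p_n},
\]
where we used that $t \mapsto \phi_n(x,t)$ is increasing together with $|u(x)| \le 1$, and then the upper bound in the first hypothesis. Integrating over $\Omega$ (of finite measure) gives $\rho_{\phi_n}(u/\lambda) \le Lc\,|\Omega|\,\lambda^{-p_n}$. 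Since $p_n \to \infty$ and $\lambda > 1$, this tends to $0$, hence is $\le 1$ for $n$ large; by the unit ball property (Lemma~\ref{lemma:Unit-ball}) we conclude $\|u/\lambda\|_{\phi_n} \le 1$, i.e.\ $\|u\|_{\phi_n} \le \lambda$ for all large $n$. Therefore $\limsup_n \|u\|_{\phi_n} \le \lambda$, and letting $\lambda \downarrow 1$ finishes this half.

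\textbf{The $\liminf$ inequality.} Here I would invoke Proposition~\ref{prop:embedding} directly: under exactly the present hypotheses it gives $\|u\|_{L^{p_n}} \le (2L(|\Omega|+c))^{1/p_n} \|u\|_{\phi_n}$. Since $u \in L^\infty(\Omega) \subset L^{p_n}(\Omega)$ and $\Omega$ has finite measure, the classical fact $\|u\|_{L^{p_n}} \to \|u\|_{L^\infty(\Omega)} = \|u\|_\infty$ holds as $p_n \to \infty$, while the prefactor $(2L(|\Omega|+c))^{1/p_n} \to 1$. Hence $\liminf_n \|u\|_{\phi_n} \ge \liminf_n (2L(|\Omega|+c))^{-1/p_n}\|u\|_{L^{p_n}} = \|u\|_\infty$. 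Combining the two parts yields $\lim_n \|u\|_{\phi_n} = \|u\|_\infty$.

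\textbf{Main obstacle.} There is essentially no deep obstacle: the whole argument is a careful bookkeeping of constants so that the $p_n$-th roots degenerate to $1$. The only point requiring a little care is the reduction to $\|u\|_\infty = 1$ (equivalently, making sure the two one-sided statements combine correctly after rescaling) and checking that the convergence $\|u\|_{L^{p_n}} \to \|u\|_\infty$ is legitimate for a general sequence $p_n \to \infty$ rather than along the continuous parameter — this follows since $q \mapsto \|u\|_{L^q}$ is monotone nondecreasing on a finite measure space past any fixed threshold, so convergence along $p_n \to \infty$ reduces to the classical limit. The heavy lifting (the sharp-constant embedding of Proposition~\ref{prop:embedding}) is assumed as given.
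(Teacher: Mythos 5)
Your proof is correct and takes essentially the same route as the source the paper points to for this statement (\cite[Proposition 3.4]{BHH}): the upper bound by estimating $\rho_{\phi_n}(u/\lambda)$ for $\lambda>\|u\|_\infty$ via \ainc{p_n} and $\phi_n(x,1)\le c$ and then invoking the unit ball property (Lemma~\ref{lemma:Unit-ball}), and the lower bound via the asymptotically sharp embedding of Proposition~\ref{prop:embedding} combined with $\|u\|_{L^{p_n}}\to\|u\|_\infty$. The only slip is the closing remark that $q\mapsto\|u\|_{L^q}$ is eventually monotone on a finite measure space: this fails when $|\Omega|>1$ (take $u\equiv 1$, so $\|u\|_{L^q}=|\Omega|^{1/q}$ is decreasing), but the remark is also unnecessary, since $\lim_{q\to\infty}\|u\|_{L^q}=\|u\|_\infty$ is a genuine limit in the continuous parameter and therefore holds along every sequence $p_n\to\infty$.
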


An example which demonstrates that \azero \,
 is not sufficient to prove  our main results (the assumption $\frac1c \le \phi_n(x,1) \le c$  is stronger, indeed) can be found in \cite[Example 3.5]{BHH}.
\\
A second example  which shows that 
the assumption that each $\phi_n$ satisfy \ainc{p_n} with the same constant $L$ is also needed for the conclusion, can be found in \cite[Example 3.6]{BHH}.
Note, also, that both examples are of Orlicz type as the dependence of $\phi$ on $x$ is not used here.

\color{black}

\subsection{Variable exponents Lebesgue-Sobolev spaces}

\color{black}
In  this subection we consider the variable exponent, as a special case  of generalized Orlicz, with $\phi(x, t) = t^{p(x)}$ for some measurable function $p: \Omega \rightarrow [1, + \infty],$  called {\it variable exponent} on $\Omega$. We collect some more specific results concerning variable exponent Lebesgue and Sobolev spaces. For more details we refer to the monograph \cite{DHHR11}.
\begin{defn}
For any (Lebesgue) measurable function $p: \Omega \rightarrow [1, + \infty]$
we define
$$
p^- := \displaystyle \infess_{x \in \Omega} p(x) \qquad \qquad p^+ :=\displaystyle
\supess_{x \in \Omega} p(x).
$$ 

\end{defn}


 If $ p^+<+\infty$ we call $p$ a {\it bounded variable exponent}, and we can define the variable exponent Lebesgue space as \[
L^{p(\cdot)}(\Omega) := \left \{u: \Omega \rightarrow \mathbb R \,\,\, \textnormal{measurable such that} \,\, \int_{\Omega} |u(x)|^{p(x)} \, dx < + \infty   \right \},
\]

which is a Banach space endowed with the {\it Luxemburg norm}.

By 
Corollary 3.3.4 in \cite{DHHR11}, if  $0 <  \mathcal{L}^N(\Omega) < + \infty$ and $p$ and $q$ are variable exponents such that $p \le q$   a.e. in $\Omega$, then the embedding $L^{q(\cdot)}(\Omega) \hookrightarrow L^{p(\cdot)}(\Omega)$ is continuous.  The embedding constant is less or equal to  
$2( \mathcal{L}^N(\Omega)  + 1)$  and 
$2  \max  \left \{  {\mathcal{L}^N(\Omega)}^{{(\frac 1 q-\frac 1 p)}^+},  {\mathcal{L}^N(\Omega)}^{{(\frac 1 q-\frac 1 p)}^-}  \right \}.$

For any variable exponent $p$, we define $p'$ by setting
\[
\frac{1}{p(x)} + \frac{1}{p'(x)} = 1,
\]
with the convention that, if $p(x) =+ \infty$ then $p'(x) = 1$. The function $p$ is called {\it the dual variable exponent of $p$}.

We have the following result (for more details see Lemma 3.2.20 in \cite{DHHR11}).

\begin{thm} {\sl (H\"older's inequality)}
Let $p,q,s$ be measurable exponents such that
\[
\frac{1}{s(x)} = \frac{1}{p(x)} + \frac{1}{q(x)}
\]
a.e. in $\Omega$. Then
\[
\|fg\|_{s(\cdot)} \le \, \left( \left(\frac{s}p\right)^+ + \left(\frac{s}q\right)^+ \right)\, \|f\|_{p(\cdot)} \, \|g\|_{q(\cdot)}
\]
for all $f \in L^{p(\cdot)}(\Omega)$ and $g \in L^{q(\cdot)}(\Omega)$, where in the case $s = p = q = \infty$, we use the convention $\frac{s}{p} = \frac{s}{q} = 1.$
\\
In particular, in the case $s  = 1$, we have
\[
\left | \int_{\Omega} f \, g \, dx\right | \le \, \int_{\Omega} |f| \, |g| \, dx \le {\, \left( \frac{1}{p^-} + \frac{1}{p'^-} \right)}\, \, \|f\|_{p(\cdot)} \, \|g\|_{p'(\cdot)}.
\] 
\end{thm}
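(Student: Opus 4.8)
The plan is to derive this from the unit ball property (Lemma~\ref{lemma:Unit-ball}), reducing everything to a single pointwise estimate coming from Young's inequality for nonnegative reals. After disposing of the trivial cases ($f\equiv 0$ or $g\equiv 0$ a.e., or one of the quasinorms infinite), I would normalize: fixing $\lambda>\|f\|_{p(\cdot)}$ and $\mu>\|g\|_{q(\cdot)}$, the definition of the Luxemburg quasinorm gives $\rho_{p(\cdot)}(f/\lambda)\le 1$ and $\rho_{q(\cdot)}(g/\mu)\le 1$, where $\rho_{p(\cdot)}(u)=\int_\Omega|u|^{p(x)}\,dx$. Multiplying the hypothesis $\frac1{s(x)}=\frac1{p(x)}+\frac1{q(x)}$ by $s(x)$ shows that $\frac{s(x)}{p(x)}$ and $\frac{s(x)}{q(x)}$ are a.e.\ conjugate exponents (each lies in $[0,1]$ and their sum is $1$), so the constant $C:=(\tfrac sp)^++(\tfrac sq)^+$ satisfies $1\le C\le 2$.

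The core step is the pointwise inequality: setting $h:=\frac{fg}{C\lambda\mu}$, Young's inequality $ab\le\frac{a^r}{r}+\frac{b^{r'}}{r'}$ applied with $r=\frac{p(x)}{s(x)}$, $r'=\frac{q(x)}{s(x)}$, $a=|f(x)/\lambda|^{s(x)}$, $b=|g(x)/\mu|^{s(x)}$ yields, on $\{p(x),q(x)<\infty\}$,
\[
\Big|\tfrac{f(x)g(x)}{\lambda\mu}\Big|^{s(x)}\;\le\;\tfrac{s(x)}{p(x)}\,\Big|\tfrac{f(x)}{\lambda}\Big|^{p(x)}+\tfrac{s(x)}{q(x)}\,\Big|\tfrac{g(x)}{\mu}\Big|^{q(x)}.
\]
Dividing by $C^{s(x)}\ge C$ (valid since $C\ge1$ and $s(x)\ge1$), using $\frac{s(x)}{p(x)}\le(\tfrac sp)^+$ and $\frac{s(x)}{q(x)}\le(\tfrac sq)^+$, and integrating over $\Omega$ gives
\[
\rho_{s(\cdot)}(h)\;\le\;\tfrac1C\Big[(\tfrac sp)^+\,\rho_{p(\cdot)}(f/\lambda)+(\tfrac sq)^+\,\rho_{q(\cdot)}(g/\mu)\Big]\;\le\;\tfrac1C\Big[(\tfrac sp)^++(\tfrac sq)^+\Big]=1,
\]
so $\|h\|_{s(\cdot)}\le1$ by the unit ball property, i.e.\ $\|fg\|_{s(\cdot)}\le C\lambda\mu$; letting $\lambda\downarrow\|f\|_{p(\cdot)}$ and $\mu\downarrow\|g\|_{q(\cdot)}$ proves the first claim.

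The particular case is then immediate: taking $s\equiv1$ and $q=p'$ one has $(\tfrac sp)^+=\tfrac1{p^-}$ and $(\tfrac sq)^+=(\tfrac1{p'})^+=\tfrac1{p'^-}$, while $\big|\int_\Omega fg\,dx\big|\le\int_\Omega|f|\,|g|\,dx$, which is precisely $\|fg\|_{s(\cdot)}$ for the constant exponent $s\equiv1$; so the general estimate specializes to the stated bound.

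The part requiring care — and where I expect the only genuine subtlety to lie — is the treatment of the sets where $p$, $q$, or $s$ take the value $+\infty$. There one uses that $\rho_{p(\cdot)}(f/\lambda)\le1$ forces $|f/\lambda|\le1$ a.e.\ on $\{p=\infty\}$ (and symmetrically for $g$ on $\{q=\infty\}$), so the pointwise inequality above persists with the conventions $\frac sp=0$ on $\{p=\infty\}$, $\frac sq=0$ on $\{q=\infty\}$, and $\frac sp=\frac sq=1$ on $\{s=p=q=\infty\}$ (where in addition $|h|\le1$ a.e., so its modular contribution vanishes); integrating these regions separately and summing gives the same bound $\rho_{s(\cdot)}(h)\le1$. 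Everything else — the conjugacy check, the homogeneity reduction, and passing to the limit in $\lambda,\mu$ — is routine once Lemma~\ref{lemma:Unit-ball} is in hand.
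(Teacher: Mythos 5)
Your argument is correct and is essentially the standard proof: the paper itself does not prove this statement but cites Lemma 3.2.20 of \cite{DHHR11}, where the proof is exactly the normalization-plus-Young's-inequality argument you give, combined with the unit ball property, with the sets $\{p=\infty\}$, $\{q=\infty\}$, $\{s=\infty\}$ handled by the convention $t^\infty=\infty\chi_{(1,\infty)}(t)$ as you indicate. No gaps.
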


The {\it modular} in this setting specializes as, 
\[
\rho_{p(\cdot)}(u) := \int_{\Omega} |u(x)|^{p(x)} \,dx.
\]
In particular, when $p^+<+\infty$,
by Lemma 3.2.5 in \cite{DHHR11},    for every $u \in L^{p(\cdot)}(\Omega)$ it holds
\begin{equation} \label{relaztotale}\min \left\{\big( \rho_{p(\cdot)}(u)\big)^{\frac 1 {p^-} } , \big(\rho_{p(\cdot)}(u)\big)^{\frac 1 {p^+} }  \right \} \leq  \|u\|_{p(\cdot)} \leq \max  \left \{  \big( \rho_{p(\cdot)}(u)\big)^{\frac 1 {p^-} } , \big(\rho_{p(\cdot)}(u)\big)^{\frac 1 {p^+} } \right \}.
\end{equation}

%

%
 We conclude this part by variable exponent Sobolev spaces, referring, for more details to \cite{DHHR11}, Definition 8.1.2). 

\begin{defn}
 
Let $k,d\in \mathbb N$, $k\geq 0$,  and let  $p$ be a measurable exponent. We define
$$W^{k, p(\cdot)}(\Omega,\mathbb R^d):=\{ u:\Omega\to \mathbb R^d :u, \partial_{\alpha} u \in L^{p(\cdot)}(\Omega,\mathbb R^d) \quad \forall \alpha \hbox{  multi-index such that $|\alpha| \le \, k$ }  \},$$
where $$L^{p(\cdot)}(\Omega,\mathbb R^d):=\{ u:\Omega\to \mathbb R^d :\ |u| \in L^{p(\cdot)}(\Omega)  \}.$$
We define the semimodular on $W^{k, p(\cdot)}(\Omega)$ by
\[
\rho_{W^{k, p(\cdot)}(\Omega)}(u) := \sum_{0 \le |\alpha| \le k} \rho_{L^{p(\cdot)}(\Omega)}(|\partial_{\alpha} u|)
\]
which induces a norm  by 
\[
\|u\|_{W^{k, p(\cdot)}(\Omega)} := \inf \left \{ \lambda > 0: \,\, \rho_{W^{k, p(\cdot)}(\Omega)}\left (\frac{u}{\lambda} \right ) \le \, 1 \right \}.
\]
\end{defn}
 Clearly $W^{0, p(\cdot)}(\Omega) = L^{p(\cdot)}(\Omega)$.

\color{black}
\section{Preliminary results}\label{pre}
In the sequel we present some preliminary definitions, properties and results mainly dealing with Young measures, $\Gamma$-convergence and convexity notions in the supremal setting. 
\subsection{Young measures}
In this subsection we briefly recall some results on the theory of Young measures (see e.g. \cite{Ba89}, \cite{BL}). Let $\Omega\subseteq \mathbb R^N$ be (as above) an open set (not necessarily
bounded) and  $d\geq 1$, we denote by $C_c(\Omega;\mathbb R^d)$ the set of continuous functions with compact support in $\Omega$, endowed with the supremum
norm. The dual of the closure of $C_c(\Omega;\mathbb R^d)$  may be identified
with the set of $\mathbb{R}^d$-valued Radon measures with finite mass
$\Me(\Omega;\mathbb{R}^d)$, through the duality
$$
\langle \mu, \varphi\rangle := \int_\Omega \varphi(\xi)\d\mu (\xi)\,, \qquad
\mu\in \Me(\Omega;\mathbb{R}^d)\,,\qquad \varphi\in C_c(\Omega;\mathbb{R}^d)\,.
$$

\begin{defn} A map $\mu:\Omega\mapsto \Me(\Omega;\mathbb{R}^d)$ is said to be
weak$^*$-measurable if $x\mapsto \langle \mu_x, \varphi\rangle$ are
measurable for all $\varphi\in C_c(\Omega;\mathbb{R}^d)$.
\end{defn}


The main result concerning Young Measures is the so called {\sl Fundamental Theorem on Young Measures}, which we omit, referring to \cite[Theorem 3.1]{Mu} where also a proof can be found.


The map $\mu:U\mapsto \Me(\mathbb R^d)$ is called {\it Young measure generated by the sequence} $(V_n)$ if 
$(V_n)$ is a sequence of measurable functions, $V_n: U\mapsto
\mathbb R^d$ and there exists a subsequence $(V_{n_k})$ and a weak$^*$-measurable map $\mu:U\mapsto \Me(\Omega; \mathbb R^d)$ such that the following  holds: 
 $\mu_x\ge 0$,
$\displaystyle{\Vert\mu_x\Vert_{\Me(\Omega; \mathbb R^d)}=\int_{\mathbb R^d}d\mu_x\le 1}$ for a.e. $x\in
U$.
\color{black}

%
%

 

 In particular, if  $(V_n)$   an equi-integrable sequence in  $L^1(U,\mathbb R^d)$, as a consequence of the Fundamental Theorem of Young measures, it generates the Young measure $\mu=(\mu_x)$ satisfying   $\Vert\mu_x\Vert_{\Me(\mathbb R^d)}=1$  such that 
$ V_{n_k} \wto  \bar{V} \hbox{ weakly in } L^1(U,\mathbb R^d)\,
$, where $$
\bar{V}(x)=  \int_{\mathbb R^d} \xi \d
\mu_x(\xi) \qquad  {\rm for\: a.e.}\: x\in U\,.
$$

The following Corollary \ref{2var} allows us to treat limits of  integrals in the form $\displaystyle \int_U f(x, V_{n}(x), DV_{n}(x))dx$ without any convexity assumption of $f(x,u,\cdot)$ (see Corollary 3.3 in \cite{Mu}).
\noindent First we recall the following definitions. 

\begin{prop}
\label{2var}
Let $U \subset \mathbb R^N$ be a measurable set with finite measure. Assume that the sequence of measurable functions $V_{n}: U\mapsto
\mathbb R^d$ generates the Young measure $(\mu_x)$. 
\begin{enumerate} 
\item If $f:U\times \mathbb R^d\mapsto \mathbb{R}^d$ is a  normal integrand such that  the negative part $f(x, V_{n}(x))^-$ is weakly relatively compact in $L^1(U,\mathbb R^d),$ then 
$$
\liminf_{n\to \infty} \int_U f(x, V_{n}(x))\dx \ge
\int_U \bar{f}(x)\dx\,,
$$

where
$$
\bar{f}(x):= \langle \mu_x , f(x,\cdot)\rangle = \int_{\mathbb R^d} f(x,y)\d
\mu_x(y)\,;
$$
\item if $f$ is a Carath\'eodory integrand such that  $(|f(\cdot, V_{n}(\cdot))|)$ is equi-integrable, then
$$
\lim_{n\to \infty} \int_U f(x, V_{n}(x))\dx = \int_U
\bar{f}(x)\dx<+\infty.
$$
\end{enumerate}
\end{prop}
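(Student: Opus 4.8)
The plan is to deduce both statements from the Fundamental Theorem on Young Measures (FTYM) quoted above, treating part (1) --- a one-sided, Fatou-type inequality for normal integrands --- as the core, and then obtaining part (2) --- the full limit for Carath\'eodory integrands --- by applying (1) to both $f$ and $-f$. Throughout one uses that $\bar f(x)=\langle\mu_x,f(x,\cdot)\rangle$ is a well-defined measurable function: this follows from weak$^*$-measurability of $x\mapsto\mu_x$ together with normality of $f$, approximating $f$ from below by Carath\'eodory integrands (Scorza--Dragoni), so that $x\mapsto\langle\mu_x,f(x,\cdot)\rangle$ is a monotone limit of measurable functions.

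For part (1) the first reduction is to the case of an integrand bounded from below. Since $f(x,V_n(x))^-$ is weakly relatively compact in $L^1(U)$, the Dunford--Pettis theorem gives equi-integrability, so $\omega(k):=\sup_n\int_{\{f(\cdot,V_n)<-k\}}f^-(x,V_n)\dx\to0$ as $k\to\infty$. Setting $f_k:=\max\{f,-k\}$, still a normal integrand, one has $0\le f_k-f\le f^-\chi_{\{f<-k\}}$, hence $\int_U f_k(x,V_n)\dx\le\int_U f(x,V_n)\dx+\omega(k)$. It therefore suffices to prove $\liminf_n\int_U f_k(x,V_n)\dx\ge\int_U\overline{f_k}\dx$ and then let $k\to\infty$, using that $\overline{f_k}\downarrow\bar f$ pointwise (monotone convergence in $\xi$ under $\mu_x$, then Fatou in $x$; the $\mu_x$ are subprobability measures, so no mass is lost, and $\overline{f_k}\ge-k$). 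For $f_k\ge-k$ one truncates from above: $f_{k,m}:=\min\{f_k,m\}$ is a bounded normal integrand, and the FTYM (weak$^*$ convergence of the image Young measures tested against $f_{k,m}$, passing from continuous to normal test integrands via Scorza--Dragoni) yields $\int_U f_{k,m}(x,V_n)\dx\to\int_U\overline{f_{k,m}}\dx$; since $\overline{f_{k,m}}\uparrow\overline{f_k}$ as $m\to\infty$ by monotone convergence, Fatou gives $\liminf_n\int_U f_k(x,V_n)\dx\ge\sup_m\int_U\overline{f_{k,m}}\dx=\int_U\overline{f_k}\dx$. This closes part (1).

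For part (2), $f$ being Carath\'eodory is in particular normal, and since $(|f(\cdot,V_n(\cdot))|)$ is equi-integrable, the negative parts of both $f(\cdot,V_n)$ and $(-f)(\cdot,V_n)$ are dominated by $|f(\cdot,V_n)|$, hence weakly relatively compact in $L^1(U)$. Applying part (1) to $f$ gives $\liminf_n\int_U f(x,V_n)\dx\ge\int_U\bar f\dx$; applying it to $-f$ gives $\liminf_n\int_U(-f)(x,V_n)\dx\ge-\int_U\bar f\dx$, i.e. $\limsup_n\int_U f(x,V_n)\dx\le\int_U\bar f\dx$, so the limit exists and equals $\int_U\bar f\dx$. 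Finiteness follows since $\sup_n\int_U|f(x,V_n)|\dx\le C<\infty$ by equi-integrability, while $|\bar f(x)|\le\overline{|f|}(x)$ and part (1) applied to the nonnegative integrand $|f|$ gives $\int_U\overline{|f|}\dx\le\liminf_n\int_U|f(x,V_n)|\dx\le C$.

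\textbf{The main obstacle.} The delicate technical point is upgrading the FTYM --- usually phrased for bounded continuous, or bounded Carath\'eodory, test functions --- to bounded normal integrands, and keeping track of measurability and of the exchange of the $\mu_x$-integral with the two successive truncation limits; this is precisely where Scorza--Dragoni and careful monotone-convergence bookkeeping are needed. Everything else (Dunford--Pettis, the two-sided argument for (2), finiteness via Fatou) is routine.
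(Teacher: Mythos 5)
The paper does not actually prove this statement: it is quoted from Corollary 3.3 of M\"uller's notes \cite{Mu}, so there is no internal argument to compare against. Your reconstruction follows the standard proof of that corollary, and several steps are correct as written: the reduction via $f_k=\max\{f,-k\}$ with the Dunford--Pettis error $\omega(k)$, the monotone-convergence bookkeeping, and the deduction of part (2) from part (1) applied to $\pm f$.

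There is, however, a genuine gap exactly at the step you flag as ``the main obstacle'', and it is not merely a matter of measurability or of passing from continuous to normal integrands. The Fundamental Theorem gives $\varphi(V_n)\stackrel{*}{\rightharpoonup}\langle\mu_x,\varphi\rangle$ only for test functions $\varphi\in C_0(\mathbb R^d)$ vanishing at infinity; for your truncation $f_{k,m}=\min\{f_k,m\}$, which is bounded but need not vanish as $|\xi|\to\infty$, the claimed convergence $\int_U f_{k,m}(x,V_n)\,dx\to\int_U\overline{f_{k,m}}\,dx$ fails whenever $(V_n)$ lets mass escape to infinity, i.e.\ when $\|\mu_x\|<1$ on a set of positive measure --- and the definition of generated Young measure recalled in the paper only guarantees $\|\mu_x\|\le 1$. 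Worse, the signed liminf inequality itself is false at that level of generality: take $V_n\equiv n$, which generates $\mu_x=0$, and $f\equiv-1$; then $f^-(\cdot,V_n)\equiv1$ is weakly relatively compact in $L^1(U)$ and $\bar f\equiv0$, yet $\liminf_n\int_U f(x,V_n)\,dx=-|U|<0$. So your argument tacitly uses $\|\mu_x\|=1$ a.e.\ (tightness of the generating sequence), which does hold in every application made in the paper (equi-integrable sequences, gradients of weakly convergent $W^{1,p}$ sequences with $p>1$) but must be stated and invoked. The repair is: record the hypothesis $\|\mu_x\|=1$ a.e.; for the nonnegative integrand $f_k+k\ge0$ truncate also in $\xi$, i.e.\ use $\min\{f_k+k,m\}\,\eta_m(\xi)$ with $\eta_m\in C_c(\mathbb R^d)$, $0\le\eta_m\le1$, $\eta_m\uparrow1$, so that the Fundamental Theorem applies and monotone convergence recovers $\overline{f_k+k}$; and then convert back to $f_k$ using $\langle\mu_x,k\rangle=k\|\mu_x\|=k$, which is precisely where $\|\mu_x\|=1$ enters irreplaceably.
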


\begin{rem}
{\rm  If $p>1$ and $\Omega\subseteq \mathbb R^N$ is a bounded open set and  $u_n\wto u$ in $W^{1,p}(\Omega,\mathbb R^d)$,   then the sequence  $(Du_n)$ is equi-integrable and generates a Young measure  $\mu=(\mu_x)$ such that $\Vert\mu_x\Vert_{\Me(\mathbb R^d)}=1$ for a.e. $x\in \Omega$  and 
$$ Du(x)= \int_{\mathbb R^{Nd}} \xi  \d
\mu_x(\xi)\,.
$$

Such a Young measure $\mu$ is usually called a $W^{1,p}$-{\sl gradient Young measure}, see \cite{Pe}.\\
Moreover, by Corollary 3.4 in \cite{Mu},  the couple $(u_n,Du_n)$ generates the Young measure $x\to \delta_{u(x)}\otimes \mu(x)$, and, if $f:\Om\times\mathbb R^d\times\mathbb R^{Nd}\to\mathbb R$ is a normal integrand bounded from below then, by Proposition \ref{2var} (1),  it follows that
\[ 
\liminf_{n\to \infty} \int_\Omega f(x, u_n(x),Du_{n}(x))\dx \geq  \int_\Omega\int_{\mathbb R^{Nd}}
f(x,u(x),\xi)d\mu_{x}(\xi)   \dx.
\]
}\end{rem}
\subsection{Convexity notions for supremal functionals}
\label{conv}

We are in position to recall the  convexity notions which are key in the supremal setting, referring to \cite{RZ} for a general review.

The following notion is proved to be necessary and sufficient for the lower semicontinuity of supremal functionals in the scalar setting, \cite{ABP, BJW} and sufficient for $L^p$ and $L^{p(\cdot)}$- approximation, see \cite{EP, PZ}, and the references therein.

\begin{defn}\label{deflc}We say that $f:\mathbb R^k\to \mathbb R$ is level convex if
 for every $t\in\mathbb R$ the level set $\big\{\xi\in\mathbb R^k\colon f(\xi)\le t\big\}$ is convex. 
\end{defn}

On the other hand such condition is not necessary for the lower semicontinuity of supremal functionals in the vectorial setting as firstly shown in \cite{BJW}.
Indeed, in this latter paper the authors characterized the lower semicontinuity of $L^\infty$-functionals by the so-called strong-Morrey quasiconvexity, which we recall next and rename, as in \cite{RZ}, {\it BJW quasi-level-convexity}.

\begin{defn}
     A function $f:\mathbb R^{Nd}\to \mathbb R$ is said to be strong Morrey quasiconvex ({\it BJW quasi-level-convex})if $$\forall\ \varepsilon>0\ \forall\ \xi\in \mathbb{R}^{Nd}\ \forall\ K>0\ \exists\ \delta=\delta(\varepsilon, K,\xi)>0:$$
$$\left.\begin{array}{l}\varphi\in W^{1,\infty}(Q;\mathbb{R}^d)\vspace{0.2cm}\\ ||D\varphi||_{L^\infty(Q;\mathbb{R}^{Nd})}\le K\vspace{0.2cm}\\ \max_{x\in\partial Q}|\varphi(x)|\le \delta\end{array}\right\}\Longrightarrow f(\xi)\le \operatorname*{ess\,sup}_{x\in Q}  f(\xi+D\varphi(x))+\varepsilon.$$
\end{defn}

On the other hand the relaxation in the supremal setting is currently open, as well as the understanding of the sufficiency of {\it BJW -quasi level convexity} (strong Morrey quasiconvexity) for $L^p$-approximation,  hence several other notions have been introduced in the literature \cite{AP, AP2, CDP, RZ}.



In particular,
we recall the notion of ${\rm curl}_{p\geq 1}$-Young quasiconvexity as in \cite{RZ}, which generalizes the concept of level-convexity (see Definition \ref{deflc}) and reduces to it in the scalar setting. We refer to \cite{CDP, AP} for previously introduced related notions and to \cite{RZ} for a comparison.
\begin{defn}\label{RZdef}
    Assume that $f$ is lower semicontinuous and bounded from below. $f$ is said to be ${\rm curl}_{(p > 1)}-Young$ quasiconvex, if
    \[ \supess_{x \in Q}f\left(\int_{\mathbb R^{Nd}} \xi d\nu_x(\xi)\right) \le \supess_{x \in Q} \left(\nu_x - \supess_{\xi \in \mathbb R^{Nd}} f(\xi)\right) 
    \]
    whenever $\nu \equiv \{\nu_x\}_{x \in Q}$ is a $W^{1,p}-$gradient Young measure for every $p \in (1, \infty)$, where $Q$ is the unit cube of $\mathbb R^n$ centered in the origin with side length $1$.
\end{defn}

The following result, which provides characterizations of ${\rm curl}_{(p>1)}$- Young quasiconvexity has been proven in \cite{RZ}.
\begin{prop}\label{RZthm}
    Let $f: \mathbb R^{Nd} \rightarrow \mathbb R$ be a lower semicontinuous function and bounded from below. Then the following conditions are equivalent.
    \begin{itemize}
        \item[$(i)$] $f$ is ${\rm curl}_{(p > 1)}-Young$ quasiconvex;
        \item[$(ii)$] $f$ verifies
        \[ f\left(\int_{\mathbb R^{Nd}} \xi d\nu_x(\xi)\right) \le \nu_x - \supess_{\xi \in \mathbb R^{Nd}} f(\xi),\,\,\text{ for a.e. } x \in Q \]
    whenever $\nu \equiv \{\nu_x\}_{x \in Q}$ is a $W^{1,p}-$gradient Young measure for every $p \in (1, \infty)$ and $Q$ is the unit cube of $\mathbb R^n$ centered in the origin with side length $1$; 
    \item[$(iii)$] $f$ verifies
        \[ f\left(\int_{\mathbb R^{Nd}} \xi d\nu_x(\xi)\right) \le \nu - \supess_{\xi \in \mathbb R^{Nd}} f(\xi),\]
    whenever $\nu$ is a homogeneous $W^{1,p}-$gradient Young measure for every $p \in (1, \infty)$.
    \end{itemize}
    Moreover, in the definition of ${\rm curl}_{(p > 1)}-Young$ quasiconvexity the domain $Q$ can be replaced by any open, bounded, connected set $\Omega \subset \mathbb R^N$ with Lipschitz boundary.
\end{prop}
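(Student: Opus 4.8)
The plan is to establish the cyclic chain $(ii)\Rightarrow(i)\Rightarrow(iii)\Rightarrow(ii)$, which yields all three equivalences, and then to deduce the concluding ``moreover'' statement from the observation that the only genuinely domain-dependent ingredient is insensitive to the shape of $\Omega$.

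The implication $(ii)\Rightarrow(i)$ is immediate: if $(ii)$ holds at a.e.\ $x\in Q$ for a given $\nu=\{\nu_x\}_{x\in Q}$, then taking the essential supremum over $x\in Q$ on both sides reproduces the inequality of Definition~\ref{RZdef}. For $(i)\Rightarrow(iii)$ I would observe that if $\nu$ is a homogeneous $W^{1,p}$-gradient Young measure for every $p\in(1,\infty)$, then the constant family $x\mapsto\nu$ is, by definition, a $W^{1,p}$-gradient Young measure on $Q$ for every such $p$; inserting it into $(i)$ and using that both sides of the resulting inequality are independent of $x$ gives $f\big(\int_{\mathbb R^{Nd}}\xi\,d\nu(\xi)\big)\le \nu-\supess_{\xi\in\mathbb R^{Nd}}f(\xi)$, which is $(iii)$.

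The heart of the matter is $(iii)\Rightarrow(ii)$, and here I would invoke the localization principle for gradient Young measures (see, e.g., \cite{Pe}): if $\nu=\{\nu_x\}_{x\in Q}$ is a $W^{1,p}$-gradient Young measure, then for a.e.\ $x_0\in Q$ the probability measure $\nu_{x_0}$ is itself a \emph{homogeneous} $W^{1,p}$-gradient Young measure --- it inherits the Jensen inequality against $W^{1,p}$-quasiconvex functions from $\nu$ and has finite $p$-th moment by the Lebesgue point property of $x\mapsto\int|\xi|^p\,d\nu_x$. Running this for $p$ in a countable dense subset of $(1,\infty)$, and using that a homogeneous $W^{1,q}$-gradient Young measure is automatically a homogeneous $W^{1,p}$-gradient Young measure whenever $q\ge p$, one isolates a single null set off which $\nu_{x_0}$ is a homogeneous $W^{1,p}$-gradient Young measure for \emph{every} $p\in(1,\infty)$. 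Applying $(iii)$ to $\nu_{x_0}$ then gives $f\big(\int_{\mathbb R^{Nd}}\xi\,d\nu_{x_0}(\xi)\big)\le \nu_{x_0}-\supess_{\xi\in\mathbb R^{Nd}}f(\xi)$ for a.e.\ $x_0\in Q$, which is precisely $(ii)$. Lower semicontinuity and lower boundedness of $f$ enter only to ensure that the $\nu_{x_0}$-essential supremum is a well-defined element of $(-\infty,+\infty]$ and that the barycenter appearing on the left is a legitimate point of evaluation.

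For the final assertion, I would note that the arguments for $(ii)\Rightarrow(i)$ and $(i)\Rightarrow(iii)$ go through verbatim with $Q$ replaced by any open, bounded, connected set $\Omega\subset\mathbb R^N$ with Lipschitz boundary --- the correspondence between homogeneous Young measures and constant families does not see the domain, since the characterization of homogeneous $W^{1,p}$-gradient Young measures via Jensen's inequality plus a moment bound is itself domain-independent --- while the localization principle invoked in $(iii)\Rightarrow(ii)$ is equally valid on such $\Omega$. Hence condition $(i)$ posed on $Q$ and condition $(i)$ posed on $\Omega$ are both equivalent to the domain-free condition $(iii)$, and therefore to one another. I expect the main obstacle to be the step $(iii)\Rightarrow(ii)$: correctly deploying the localization theorem for $W^{1,p}$-gradient Young measures, handling the uniformity in $p$, and reconciling the pointwise Jensen inequalities with the supremal (level-set) structure; the remaining implications are essentially bookkeeping.
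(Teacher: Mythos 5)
The paper does not actually prove this proposition: it is quoted verbatim from the reference \cite{RZ} (Ribeiro--Zappale), so there is no in-paper argument to compare yours against. That said, your cyclic scheme $(ii)\Rightarrow(i)\Rightarrow(iii)\Rightarrow(ii)$ is sound and is the natural route: the first two implications are indeed bookkeeping (essential supremum over $x$, and viewing a homogeneous measure as a constant family), while the substantive step $(iii)\Rightarrow(ii)$ correctly rests on the Kinderlehrer--Pedregal localization principle, with the ``for every $p\in(1,\infty)$'' uniformity handled, as you do, by intersecting over a countable sequence $p_k\to\infty$ and using that a homogeneous $W^{1,q}$-gradient Young measure is a homogeneous $W^{1,p}$-gradient Young measure for $p\le q$. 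Your derivation of the ``moreover'' clause, by routing both the $Q$-version and the $\Omega$-version of $(i)$ through the domain-free condition $(iii)$, is likewise the expected argument; the only caveat is that full verification would require checking the localization theorem in the precise form you invoke against \cite{Pe}, which is where the actual work of \cite{RZ} lies.
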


In the next section we will prove that ${\rm curl}_{(p > 1)}-Young$ quasiconvexity is sufficient for power law approximation also in the generalized Orlicz-Sobolev setting. 

\subsection{$\Gamma-$convergence}

We recall the sequential characterization of  the $\Gamma$-limit when $X$ is a metric space. For more details we refer to \cite{B, DM93}.

\begin{prop}\label{seqcharac}[\cite[Proposition 8.1]{DM93}]
Let $X$ be a metric  space and let $\f_n: X \ds \mathbb R \cup
\{\pm \infty\}$ for every $n\in \mathbb N$.
 Then  $(\f_n)$ $\Gamma$-converges to $\f$ with respect to the strong topology of  $X$ (and we write $ \Gamma(X)$-$\lim_{n\to \infty}\f_n=\f$)  if and only if
\begin{description}
\item [(i)] {\rm($\Gamma$-$\liminf$ inequality)} for every $x\in X$ and for every sequence $(x_n)$ converging
to $x$, it is
$$  \f(x)\le \liminf_{n\to \infty} \f_n(x_n);$$
\item [(ii)]{\rm ($\Gamma$-$\limsup$ inequality)} for every $x \in X$, there exists a sequence $(x_n)$  converging
to $x$ such that
$$  \f(x)=\lim_{n\to \infty} \f_n(x_n).$$
\end{description}
\end{prop}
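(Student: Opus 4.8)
The plan is to unwind the (topological) definition of $\Gamma$-convergence used in \cite{DM93} and to exploit that a metric space is first countable. Recall that $(\varphi_n)$ $\Gamma$-converges to $\varphi$ for the strong topology of $X$ precisely when the $\Gamma$-lower and $\Gamma$-upper limits
\[
\varphi'(x):=\sup_{U\in\mathcal{N}(x)}\ \liminf_{n\to\infty}\ \inf_{y\in U}\varphi_n(y),\qquad
\varphi''(x):=\sup_{U\in\mathcal{N}(x)}\ \limsup_{n\to\infty}\ \inf_{y\in U}\varphi_n(y),
\]
where $\mathcal{N}(x)$ is the family of open neighbourhoods of $x$, both equal $\varphi(x)$ for every $x\in X$; note that $\varphi'\le\varphi''$ always holds. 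Thus the statement amounts to the equivalence of (i)+(ii) with $\varphi'=\varphi''=\varphi$.

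The core step I would isolate as a lemma is the \emph{sequential representation}
\[
\varphi'(x)=\min\Big\{\liminf_{n\to\infty}\varphi_n(x_n)\ :\ x_n\to x\Big\},\qquad
\varphi''(x)=\min\Big\{\limsup_{n\to\infty}\varphi_n(x_n)\ :\ x_n\to x\Big\},
\]
with the minima attained. That $\varphi'(x)$ (resp.\ $\varphi''(x)$) is a lower bound for the set on the right is immediate: if $x_n\to x$ and $U\in\mathcal{N}(x)$, then $x_n\in U$ for $n$ large, hence $\inf_U\varphi_n\le\varphi_n(x_n)$ eventually; passing to $\liminf$ (resp.\ $\limsup$) and then to the supremum over $U$ gives the claim. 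For the reverse inequality and the attainment I would use first countability: the balls $B(x,\tfrac1k)$, $k\in\mathbb N$, form a decreasing neighbourhood base, so $\varphi'(x)=\lim_{k\to\infty}\liminf_{n}\inf_{B(x,1/k)}\varphi_n$ (the quantity being nondecreasing in $k$), and likewise for $\varphi''$ with $\limsup$. A diagonal argument then yields the optimal sequence: for $\varphi''$, if $L:=\varphi''(x)$ is finite, choose integers $N_1<N_2<\cdots$ with $\inf_{B(x,1/j)}\varphi_n< L+\tfrac1j$ for all $n\ge N_j$, and for $N_j\le n<N_{j+1}$ pick $x_n\in B(x,\tfrac1j)$ with $\varphi_n(x_n)<L+\tfrac1j$ (set $x_n:=x$ for $n<N_1$); then $x_n\to x$ and $\limsup_n\varphi_n(x_n)\le L$, so equality holds. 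The cases $L=\pm\infty$ are handled in the same way, replacing $L+\tfrac1j$ by a divergent threshold when $L=+\infty$ and picking $x_n$ with $\varphi_n(x_n)<-j$ when $\inf_{B(x,1/j)}\varphi_n=-\infty$; the argument for $\varphi'$ is analogous.

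Granting the lemma, the equivalence is pure bookkeeping. If (i) and (ii) hold: (i) gives $\varphi(x)\le\liminf_n\varphi_n(x_n)$ for every $x_n\to x$, hence $\varphi(x)\le\varphi'(x)$ after taking the infimum over sequences; (ii) provides $x_n\to x$ with $\lim_n\varphi_n(x_n)=\varphi(x)$, whence $\varphi''(x)\le\limsup_n\varphi_n(x_n)=\varphi(x)$; together with $\varphi'\le\varphi''$ this forces $\varphi'(x)=\varphi''(x)=\varphi(x)$. Conversely, if $\varphi'=\varphi''=\varphi$: for any $x_n\to x$ the lower-bound part of the lemma gives $\liminf_n\varphi_n(x_n)\ge\varphi'(x)=\varphi(x)$, i.e.\ (i); and choosing $x_n\to x$ attaining the minimum in the representation of $\varphi''(x)$ gives $\limsup_n\varphi_n(x_n)=\varphi(x)$, which combined with the inequality just proved yields $\lim_n\varphi_n(x_n)=\varphi(x)$, i.e.\ (ii). The main obstacle is the diagonalization establishing the sequential representation with attained extrema, in particular the careful handling of the $\pm\infty$ values and of the possibility that $\inf_{B(x,1/k)}\varphi_n=-\infty$; the rest is a routine dictionary between the neighbourhood formulation of the $\Gamma$-limits and sequences, legitimate because $X$ is metric.
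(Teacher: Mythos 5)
The paper does not prove this proposition: it is recalled verbatim from Dal Maso's monograph (Proposition 8.1 of \cite{DM93}) and used as a black box. Your argument is correct and is essentially the standard proof from that reference --- reduce to the neighbourhood-based $\Gamma$-limits $\varphi'$, $\varphi''$, establish their sequential representation with attained minima via first countability and a diagonal extraction over the shrinking balls $B(x,1/k)$ (including the $\pm\infty$ cases), and then read off the equivalence with (i)--(ii) --- so there is nothing to correct or compare.
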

We recall that the  $\Gamma$-$\lim_{n\to \infty}\f_n$ is lower semicontinuous on $X$ (see \cite[Proposition 6.8]{DM93}).

Analogous definition can be provided for the $\Gamma$-convergence with respect to the waek * topology in X, due to its metrizability in compact sets, denoted by $\Gamma(w^*\hbox{-}X)\hbox{-}\lim_{n\to \infty}\f_n$ .
Hence we recall also  that the function  $\f=\Gamma(w^*\hbox{-}X)\hbox{-}\lim_{n\to \infty}\f_n$ is weakly* lower semicontinuous  on $X$ (see \cite{DM93} Proposition 6.8) and when  $\f_n=\psi$  $\forall n\in\mathbb N$ then   $\f$   coincides with the  weakly*  lower semicontinuous (l.s.c.) envelope  of $\psi$, i.e. 
\[ 
\f(x)=\sup\big\{h(x):\  \forall\, h:X\to\mathbb R\cup\{\pm\infty\} \   \ w^* \hbox
{ l.s.c.}, \  h\le \psi\hbox { on } X\big\}
\]
(see Remark 4.5 in \cite{DM93}).
\\
Finally we recall that (i) and (ii) can be assumed to be definition for the sequential $\Gamma$-limit of a sequence of functionals $(\f_n)$ also if $X$ is not a metric space, for instance in the case of weak topology in some function space. With an abuse of notation we will not distinguish between sequential $\Gamma$-limits and $\Gamma$-limits as it will be clear from the context (see \cite{DM93} for details). In particular we observe that the results of Theorems \ref{thm:main-norm} and \ref{thm:main-modular} are in terms of sequential $\Gamma$-convergence.

We will say that a family $(\f_p)$ $\Gamma$-converges to $\f$, with respect to the topology considered on $X$ as $p\to \infty$,
if $(\f_{p_n})$ $\Gamma$-converges to $\f$ for all sequences $(p_n)$ of positive numbers diverging to $\infty$ as $n\to\infty$.

\smallskip

\noindent For details about $\Gamma$-convergence we refer to \cite{DM93} and \cite{B}.

\section{Lower growth approximation}\label{Lp}

In this section we study the approximation as the lower growth rate tends to $\infty$, via $\Gamma$-convergence, of supremal functionals in terms of generalized Orlicz-type quasinorms.

In the following we consider a normal integrand  $f:\Om\times\mathbb R^d\times\mathbb R^{Nd}\to\mathbb R$, satisfying {\bf (H1)}, {\bf(H2)}.
\subsection{Statement of the main results}


We start by stating all theorems to easily compare the  results obtained according to the different set of  hypotheses and topologies considered.


Let $\phi\in\Phi_w(\Omega)$, 
we recall the functionals $F_\phi, E_\phi : L^1(\Omega, \mathbb R^d) \to [0,\infty]$, introduced in \eqref{Fphi} and \eqref{Ephi}, defined by 
\begin{equation*}
F_\phi(u) := \begin{cases} 
\|f(\cdot, u, Du)\|_\phi &\text{if } u \in W^{1,1}_\loc(\Omega, \mathbb R^d), \\ 
\infty &\text{otherwise}.
\end{cases}
\end{equation*}
and
\begin{equation*}
E_\phi(u) := \begin{cases} 
\rho_\phi(f(\cdot, u, Du)) &\text{if } u \in W^{1,1}_\loc(\Omega, \mathbb R^d), \\ 
\infty &\text{otherwise},
\end{cases}
\end{equation*}
The functionals $F_\infty$ and $E_\infty$ in \eqref{Finfinity} and \eqref{Einfinity} refer to the case
$\phi=\phi_\infty$ from Example~\ref{eg:Linfty}, related to $L^\infty$, i.e.
\begin{equation*}
  F_\infty(u) = 
\begin{cases}
\|f(\cdot, u, Du)\|_{\infty} &\text{if } u \in W^{1,1}_\loc(\Omega, \mathbb R^d), \\ 
\infty &\text{otherwise}.
\end{cases}  
\end{equation*}
and
\begin{equation*}
  E_\infty(u) = 
\begin{cases}
0, &\text{if } u \in W^{1,1}_\loc(\Omega, \mathbb R^d) \text{ and }|f(\cdot, u, Du)|\le 1 \text{ a.e.}, \\
\infty, &\text{otherwise},
\end{cases}  
\end{equation*}
\color{black}

Let $(F_n)$ and $(E_n)$ defined by \eqref{Fphi} and \eqref{Ephi}, respectively, as $F_n:= F_{\phi_n}$ and $E_n:=E_{\phi_n}$. We prove in Theorems~\ref{thm:main-norm} and \ref{thm:main-modular} 
that $(F_n)$ and $(E_n)$ (sequentially) $\Gamma$-converge to $F_\infty$ and $E_\infty$ respectively, with respect to the $L^1$-weak topology. Specifically, we prove the 
$\liminf$-property of $\Gamma$-convergence, i.e. (i) of Proposition \ref{seqcharac} and show that we can use a constant recovery 
sequence for (ii) in Proposition \ref{seqcharac}.


\begin{thm}\label{thm:main-norm}
Assume that $\Omega$ has finite measure. Let 
$f: \Omega \times \mathbb R^d \times \mathbb R^{Nd} \to \mathbb R$ be a normal integrand such that  ${\bf (H1)}$ and ${\bf (H2)}$ hold.

Let $\phi_n\in \Phi_w(\Omega)$ for $n\in\mathbb N$ and $c, L\ge 1$ and assume that {\bf (H3)} and {\bf (H4)}
hold. 

If $p_n \to \infty$ as $n \to \infty$, then
\[ 
\limsup _{n \to \infty} F_n(u) \le  F_\infty(u) 
\le \liminf _{n \to \infty} F_n(u_n)
\]
for all $u, u_n \in L^1(\Omega, \mathbb R^d)$ with $u_n \rightharpoonup u$ in $L^1(\Omega, \mathbb R^d)$.
\end{thm}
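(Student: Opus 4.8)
The plan is to prove the two inequalities separately, as is standard for $\Gamma$-convergence results, exploiting the embedding from Proposition~\ref{prop:embedding} to reduce the $\liminf$ part to a known $L^p$-approximation result via Young measures, and exploiting Proposition~\ref{prop} for the $\limsup$ part.

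\textbf{The $\limsup$ inequality (constant recovery sequence).} First I would show $\limsup_n F_n(u) \le F_\infty(u)$ using the constant sequence $u_n \equiv u$. If $F_\infty(u) = \infty$ there is nothing to prove, so assume $u \in W^{1,1}_\loc(\Omega,\mathbb R^d)$ with $g := f(\cdot, u, Du) \in L^\infty(\Omega)$; then $F_\infty(u) = \|g\|_\infty$. Since $\phi_n$ satisfies \ainc{p_n} with constant $L$ and $\frac1c \le \phi_n(x,1) \le c$ by {\bf (H3)}--{\bf (H4)}, Proposition~\ref{prop} applies directly to the fixed function $g \in L^\infty(\Omega)$, giving $\lim_n \|g\|_{\phi_n} = \|g\|_\infty$, i.e. $\lim_n F_n(u) = F_\infty(u)$. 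This even gives equality in the limit, not just $\limsup \le$, which is exactly the statement that a constant recovery sequence works.

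\textbf{The $\liminf$ inequality.} Next I would prove $F_\infty(u) \le \liminf_n F_n(u_n)$ for $u_n \rightharpoonup u$ in $L^1$. Passing to a subsequence realizing the $\liminf$, we may assume $\sup_n F_n(u_n) =: M < \infty$ (otherwise nothing to prove), and in particular each $u_n \in W^{1,1}_\loc$. Fix any $q \in (1,\infty)$. For $n$ large enough that $p_n \ge q$, the almost-increasing property \ainc{p_n} implies \ainc{q} (with the same constant $L$, since $p_n \ge q$), so Proposition~\ref{prop:embedding} gives $\|f(\cdot,u_n,Du_n)\|_{L^q} \le (2L(|\Omega|+c))^{1/q} \|f(\cdot,u_n,Du_n)\|_{\phi_n} = (2L(|\Omega|+c))^{1/q} F_n(u_n) \le C_q M$. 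Combined with {\bf (H2)}, $\alpha^{1/\gamma}\|Du_n\|_{L^{q\gamma}} \le \|f(\cdot,u_n,Du_n)\|_{L^q}^{1/\gamma}$ is bounded; choosing $q$ so that $q\gamma > 1$, the gradients $(Du_n)$ are bounded in $L^{q\gamma}$ with $q\gamma>1$, hence equi-integrable, and together with $u_n \rightharpoonup u$ one checks (on bounded pieces of $\Omega$, using that $Du_n \rightharpoonup$ some limit which must be $Du$) that $u \in W^{1,1}_\loc(\Omega,\mathbb R^d)$ and $(u_n, Du_n)$ generates a Young measure of the form $x \mapsto \delta_{u(x)} \otimes \nu_x$ with $\nu$ a $W^{1,p}$-gradient Young measure for every $p \in (1, q\gamma)$ — in particular for some $p > 1$, so {\bf (H1)} applies to $\nu$. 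Now for fixed $q$, lower semicontinuity of the $L^q$-norm along the generating sequence via Proposition~\ref{2var}(1) applied to the normal integrand $(x,u,\xi) \mapsto f(x,u,\xi)^q$ (whose negative part vanishes by {\bf (H2)}) yields
\[
\liminf_n \|f(\cdot,u_n,Du_n)\|_{L^q} \ge \Big( \int_\Omega \int_{\mathbb R^{Nd}} f(x,u(x),\xi)^q \, d\nu_x(\xi)\, dx \Big)^{1/q} \ge \Big( \int_\Omega \big(\nu_x\text{-}\supess f(x,u(x),\cdot)\big)^q dx \Big)^{1/q},
\]
where the last step would be the routine inequality $\int \cdot^q d\nu_x \le (\nu_x\text{-}\esssup)^q$... actually I want the reverse, so instead: by {\bf (H1)} and Proposition~\ref{RZthm}(ii), $f(x,u(x),Du(x)) = f\big(x,u(x),\int \xi \, d\nu_x(\xi)\big) \le \nu_x\text{-}\supess_\xi f(x,u(x),\xi)$ for a.e.\ $x$, and the latter is $\le \liminf_n \|f(\cdot,u_n,Du_n)\|_{L^\infty(\cdot)}$-type quantity — more precisely I would combine the above with $\big(\int_A \nu_x\text{-}\supess(\cdot)^q\big)^{1/q} \ge$ things approaching the essential sup and let $q \to \infty$. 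Letting $q \to \infty$ with the constant $(2L(|\Omega|+c))^{1/q} \to 1$ gives $F_\infty(u) = \|f(\cdot,u,Du)\|_\infty \le \liminf_n F_n(u_n)$.

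\textbf{Main obstacle.} The delicate point is the interplay of the two limits: for each fixed $q$ one extracts gradient-Young-measure information and applies {\bf (H1)} via Proposition~\ref{RZthm}, but one must then send $q \to \infty$ and check that the Young measure $\nu$ (obtained by a diagonal/subsequence argument, and a priori depending on $q$) can be taken independent of $q$ and remains a $W^{1,p}$-gradient Young measure for all $p \in (1,\infty)$ so that {\bf (H1)} is fully applicable — this requires the equi-integrability of $(Du_n)$ in $L^{q\gamma}$ for arbitrarily large $q$, which is exactly what the bound $\|Du_n\|_{L^{q\gamma}} \le C_q M$ (uniform in $n$ for each $q$) provides, after passing to a single subsequence working for a countable set of $q$'s increasing to $\infty$ and a standard diagonal argument. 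The bookkeeping of constants (ensuring $(2L(|\Omega|+c))^{1/q}\to 1$, extracting the essential supremum as a monotone limit of $L^q$-means over the measure, verifying $Du$ is the weak limit hence the barycenter of $\nu_x$) is routine but must be done carefully; everything else follows the template of \cite{BHH} and \cite{CDP}.
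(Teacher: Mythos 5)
Your proposal follows essentially the same route as the paper's proof: Proposition~\ref{prop} applied to the constant recovery sequence for the $\limsup$ bound, and, for the $\liminf$ bound, the embedding of Proposition~\ref{prop:embedding} together with {\bf (H2)} to bound $(Du_n)$ in every $L^q$, generation of a gradient Young measure with barycenter $Du$, Proposition~\ref{2var}, and finally {\bf (H1)} via Proposition~\ref{RZthm}(ii) after sending $q\to\infty$ with $(2L(|\Omega|+c))^{1/q}\to 1$. The displayed inequality you rightly retract is indeed false, but the corrected path you sketch — the $L^{q}$ means of $f(x,u(x),\xi)$ with respect to $d\nu_x(\xi)\,dx$ tend to $\esssup_x\big(\nu_x-\supess_\xi f(x,u(x),\xi)\big)$, which dominates $\esssup_x f(x,u(x),Du(x))$ by {\bf (H1)} evaluated at the barycenter — is exactly the argument the paper uses (localizing to balls where you say ``on bounded pieces of $\Omega$'').
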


\begin{cor}\label{gamma2} 
Let $X\in \{ L^{\infty}(\Omega,\mathbb R^{d}), C(\Omega,\mathbb R^{d})\}$ be endowed with the norm $||\cdot||_{\infty}$. Under the same assumptions of Theorem \ref{thm:main-norm}, let $F_n, F_\infty :X\to [0, + \infty]$ be the functionals  defined by \eqref{Fphi} (with $\phi= \phi_n$) and \eqref{Finfinity}, respectively.
Then the sequence $(F_n)$  $\Gamma\hbox{-}$ converges, with respect to the $L^{\infty}$-strong  convergence, to the functional $F_\infty$. 
 as $n\to +\infty$.
 \end{cor}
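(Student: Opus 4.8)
The plan is to deduce Corollary~\ref{gamma2} from Theorem~\ref{thm:main-norm} by checking the two conditions of Proposition~\ref{seqcharac} in the metric space $X$ endowed with $\|\cdot\|_\infty$. First I would verify the $\Gamma$-$\liminf$ inequality: if $u_n \to u$ strongly in $X$ (hence in $L^\infty(\Omega,\mathbb R^d)$), then since $\Omega$ has finite measure, $u_n \to u$ strongly in $L^1(\Omega,\mathbb R^d)$, and in particular $u_n \rightharpoonup u$ weakly in $L^1(\Omega,\mathbb R^d)$. Applying the right-hand inequality of Theorem~\ref{thm:main-norm} gives directly
\[
F_\infty(u) \le \liminf_{n\to\infty} F_n(u_n),
\]
which is condition (i). Note that here $F_\infty$ and $F_n$ are the restrictions to $X$ of the functionals defined on $L^1$, so the inequality transfers verbatim.

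Next I would establish the $\Gamma$-$\limsup$ inequality using the constant recovery sequence $u_n \equiv u$, which trivially converges to $u$ in $X$. The left-hand inequality of Theorem~\ref{thm:main-norm} gives $\limsup_{n\to\infty} F_n(u) \le F_\infty(u)$, while the $\Gamma$-$\liminf$ part just proved, applied to the constant sequence, gives $F_\infty(u) \le \liminf_{n\to\infty} F_n(u)$. Chaining these yields $\lim_{n\to\infty} F_n(u) = F_\infty(u)$, so the constant sequence is a recovery sequence and condition (ii) holds. By Proposition~\ref{seqcharac}, $(F_n)$ $\Gamma$-converges to $F_\infty$ with respect to the $L^\infty$-strong topology on $X$; since the defining sequence $(p_n)$ was arbitrary with $p_n\to\infty$, this is $\Gamma$-convergence as $n\to+\infty$ in the sense fixed in Section~\ref{pre}.

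The only genuinely delicate point — and really the only thing to watch — is the measurability/finiteness bookkeeping when passing between the ambient space $L^1(\Omega,\mathbb R^d)$ (on which Theorem~\ref{thm:main-norm} is stated) and the subspace $X$: one must observe that for $u\in X\subset L^\infty(\Omega,\mathbb R^d)\subset L^1(\Omega,\mathbb R^d)$ (using $|\Omega|<\infty$) the values $F_n(u)$, $F_\infty(u)$ are unchanged by the restriction, and that strong $L^\infty$-convergence on $X$ implies weak $L^1$-convergence, so that Theorem~\ref{thm:main-norm} is applicable to every convergent sequence in $X$. Everything else is a direct quotation of Theorem~\ref{thm:main-norm} together with the abstract characterization in Proposition~\ref{seqcharac}; in particular no new convexity or growth arguments are needed, and the case $X=C(\Omega,\mathbb R^d)$ is handled identically since it too embeds into $L^\infty(\Omega,\mathbb R^d)$ with the same norm.
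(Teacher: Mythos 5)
Your proof is correct. The one point that genuinely needs attention is the one you flag: since $\Omega$ has finite measure, $\|u_n-u\|_{L^1}\le |\Omega|\,\|u_n-u\|_{\infty}$, so $L^\infty$-strong convergence in $X$ implies weak $L^1$-convergence, the restriction of $F_n,F_\infty$ to $X$ does not change their values, and both inequalities of Theorem~\ref{thm:main-norm} then apply verbatim; Proposition~\ref{seqcharac} with the constant recovery sequence closes the argument. This is a more economical route than the paper's. The paper likewise uses the constant recovery sequence for the $\Gamma$-$\limsup$ part, but for the $\Gamma$-$\liminf$ part it does not invoke Theorem~\ref{thm:main-norm} as a black box: it re-runs the lower-bound mechanism directly for $L^\infty$-convergent sequences, showing (as in \cite[Theorem 5.3]{EP}) that $(Du_n)$ converges weakly in $L^q(\Omega,\mathbb R^{Nd})$ for every $q>1$, hence generates a gradient Young measure with barycenter $Du$, and then applies the ${\rm curl}_{(p>1)}$-Young quasiconvexity of $f(x,u,\cdot)$ through Proposition~\ref{RZthm}. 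The two arguments establish the same inequality: yours buys brevity by observing that weak $L^1$ convergence is all the theorem's hypothesis asks for, while the paper's version is self-contained in the $L^\infty$ topology and makes explicit that for uniformly convergent sequences the Young-measure step requires none of the compactness bootstrapping (energy bounds, the embedding of Proposition~\ref{prop:embedding}, Poincar\'e, localization to balls) that the proof of Theorem~\ref{thm:main-norm} goes through.
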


We conclude with similar results for the modular-based energy functional. We use the notation $\displaystyle\phi^-(1):= \essinf_{x \in \Omega} \phi(x, 1)$ and $\displaystyle\phi^+(1):= \esssup_{x \in \Omega} \phi(x, 1)$. Arguing as in the proof of \cite[Theorem 4.4]{BHH} and exploiting the previous theorem, the following corollary holds.
\begin{thm}\label{thm:main-modular} Let $\Omega$, $f, \phi_n$ and $L$ as in Theorem \ref{thm:main-norm}, satisfying ${\bf (H1)}-{\bf (H3)}$ and {\bf (H5)} 

Let $E_n$ be the functional defined by \eqref{Ephi} (with $\phi=\phi_n$).
If $p_n \to \infty$ as $n \to \infty$, then
\[ 
\limsup _{n \to \infty} E_n(u) \le  E_\infty(u) 
\le \liminf _{n \to \infty} E_n(u_n)
\]
for all $u, u_n \in L^1(\Omega, \mathbb R^d)$ with $u_n \rightharpoonup u$ in $L^1(\Omega, \mathbb R^d)$.
where $E_\infty$ is the functional in \eqref{Einfinity}.
\end{thm}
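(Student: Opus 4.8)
The plan is to reduce the statement to Theorem~\ref{thm:main-norm}, applied not to the given modulars $\phi_n$ (which need not be anchored at $1$ — indeed {\bf (H5)} forces $\phi_n(\cdot,1)\to 0$ uniformly and is therefore incompatible with {\bf (H4)}) but to the Lebesgue modulars $\psi_n(x,t):=t^{p_n}$, for which $F_{\psi_n}(u)=\|f(\cdot,u,Du)\|_{L^{p_n}(\Omega)}$ and which satisfy {\bf (H3)}, {\bf (H4)} with $c=L=1$. Following Proposition~\ref{seqcharac}, I would establish the $\limsup$- (recovery) inequality with the constant sequence and the $\liminf$-inequality separately.

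For the recovery inequality take $u_n\equiv u$. If $E_\infty(u)=\infty$ there is nothing to prove, so assume $u\in W^{1,1}_\loc(\Omega,\mathbb R^d)$ with $|f(\cdot,u,Du)|\le 1$ a.e. Since $t\mapsto\phi_n(x,t)$ is increasing, $\phi_n(x,|f(x,u,Du)|)\le\phi_n(x,1)\le\phi_n^+(1)$ for a.e.\ $x$, hence $E_n(u)=\rho_{\phi_n}(f(\cdot,u,Du))\le|\Omega|\,\phi_n^+(1)$; by the first half of {\bf (H5)} this tends to $0=E_\infty(u)$.

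For the $\liminf$-inequality, let $u_n\rightharpoonup u$ in $L^1(\Omega,\mathbb R^d)$ with $\ell:=\liminf_n E_n(u_n)<\infty$ (otherwise nothing to prove); passing to a subsequence (not relabelled) I may assume $E_n(u_n)\le C$ for all $n$ and some finite $C$, so that $u_n\in W^{1,1}_\loc(\Omega,\mathbb R^d)$ and $\rho_{\phi_n}(g_n)\le C$, where $g_n:=f(\cdot,u_n,Du_n)$. The crucial step is to trade this modular bound for an $L^{p_n}$-bound, playing the role that Proposition~\ref{prop:embedding} plays under {\bf (H4)}: on $\{|g_n|>1\}$, applying $\ainc{p_n}$ with $\lambda=1/|g_n(x)|<1$ and $t=|g_n(x)|$ gives $\phi_n(x,|g_n(x)|)\ge\tfrac1L|g_n(x)|^{p_n}\phi_n(x,1)\ge\tfrac{\phi_n^-(1)}{L}|g_n(x)|^{p_n}$, while $|g_n|^{p_n}\le 1$ on $\{|g_n|\le 1\}$; integrating,
\[
\int_\Omega|g_n|^{p_n}\,dx\le|\Omega|+\frac{L}{\phi_n^-(1)}\,\rho_{\phi_n}(g_n)\le|\Omega|+\frac{LC}{\phi_n^-(1)}.
\]
Therefore $\|g_n\|_{L^{p_n}(\Omega)}\le\bigl(|\Omega|+LC\,\phi_n^-(1)^{-1}\bigr)^{1/p_n}$, and since $|\Omega|^{1/p_n}\to1$, $(LC)^{1/p_n}\to1$ and $\limsup_n\phi_n^-(1)^{-1/p_n}=\bigl(\liminf_n\phi_n^-(1)^{1/p_n}\bigr)^{-1}\le1$ by the second half of {\bf (H5)}, one obtains $\limsup_n\|g_n\|_{L^{p_n}(\Omega)}\le1$. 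Applying Theorem~\ref{thm:main-norm} to $\psi_n(x,t)=t^{p_n}$ along this subsequence (all its hypotheses hold, with $f$ as given) gives $F_\infty(u)\le\liminf_n\|g_n\|_{L^{p_n}(\Omega)}\le1$; in particular $u\in W^{1,1}_\loc(\Omega,\mathbb R^d)$ and $|f(\cdot,u,Du)|\le1$ a.e., so $E_\infty(u)=0\le\ell$.

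The main obstacle I expect is exactly this modular-to-norm passage: one has to check that the constant generated by $\ainc{p_n}$ together with the anchor $\phi_n^-(1)$ is asymptotically absorbed by the sharp growth requirement $\liminf_n\phi_n^-(1)^{1/p_n}\ge1$ in {\bf (H5)}, and to be careful that the subsequence extracted to realise $\liminf_n E_n(u_n)$ is still a legitimate input to Theorem~\ref{thm:main-norm} (it is, since $p_n\to\infty$ persists along subsequences). Once both inequalities are in hand, the strong $L^\infty$- and continuous-setting counterparts follow exactly as in Corollary~\ref{gamma2}.
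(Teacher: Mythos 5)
Your proof is correct and takes essentially the same route as the paper, whose own argument for Theorem \ref{thm:main-modular} is a one-line reduction to Theorem \ref{thm:main-norm} in the manner of \cite[Theorem 4.4]{BHH}: a constant recovery sequence controlled by $|\Omega|\,\phi_n^+(1)\to 0$ for the upper bound, and a conversion of the modular bound into an asymptotically sharp $L^{p_n}$-norm bound via \ainc{{p_n}} and {\bf (H5)}, fed into the norm theorem, for the lower bound. Your write-up has the added merit of being self-contained and of correctly observing that {\bf (H5)} is incompatible with {\bf (H4)}, so the norm theorem must be applied to the anchored modulars $t^{p_n}$ rather than to the $\phi_n$ themselves.
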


\begin{cor}\label{gamma6} Let $X$ be as in Corollary \ref{gamma2}. Under the same assumptions of Theorem \ref{thm:main-modular}. 
let $E_n, E_\infty:X\to [0, + \infty]$ be the functionals defined by equations \eqref{Ephi} (with $\phi=\phi_n$) 
and \eqref{Einfinity}, respectively.
Then, $(E_n)$  $\Gamma\hbox{-}$ converges $E_\infty$, as $n\to +\infty$,  with
 respect to the $L^{\infty}$- strong  convergence.
 \end{cor}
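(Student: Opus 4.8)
The plan is to derive this corollary from Theorem~\ref{thm:main-modular} in exactly the same way Corollary~\ref{gamma2} is obtained from Theorem~\ref{thm:main-norm}: only the weak $L^1$ statement of Theorem~\ref{thm:main-modular} is needed, and restricting to the smaller space $X$ makes both halves of the $\Gamma$-convergence statement easier. Since $X$ equipped with $\|\cdot\|_\infty$ is a metric space, I would invoke the sequential characterization of Proposition~\ref{seqcharac}: it suffices to verify the $\Gamma$-$\liminf$ inequality (i) and to produce a recovery sequence for (ii), both for the restricted functionals $E_n, E_\infty : X \to [0,+\infty]$.

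For (i), given $u \in X$ and $(u_n) \subset X$ with $u_n \to u$ in $L^\infty(\Omega,\mathbb R^d)$, I would use that $\Omega$ has finite measure, so $L^\infty(\Omega,\mathbb R^d) \hookrightarrow L^1(\Omega,\mathbb R^d)$ continuously and hence $u_n \rightharpoonup u$ in $L^1(\Omega,\mathbb R^d)$; the inequality $E_\infty(u) \le \liminf_n E_n(u_n)$ is then precisely the right-hand inequality of Theorem~\ref{thm:main-modular}. For (ii), for $u \in X$ I would take the constant sequence $u_n \equiv u$, which trivially converges to $u$ in $X$ and (being constant, hence strongly $L^1$-convergent) converges weakly to $u$ in $L^1(\Omega,\mathbb R^d)$; feeding it into the chain of Theorem~\ref{thm:main-modular} gives $\limsup_n E_n(u) \le E_\infty(u) \le \liminf_n E_n(u_n) = \liminf_n E_n(u)$, so $\lim_n E_n(u) = E_\infty(u)$. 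This also covers the degenerate cases, e.g.\ $u \notin W^{1,1}_\loc(\Omega,\mathbb R^d)$ or $|f(\cdot,u,Du)|\not\le 1$ a.e., where both sides equal $+\infty$. Combining (i) and (ii) with Proposition~\ref{seqcharac} yields that $(E_n)$ $\Gamma$-converges to $E_\infty$ in $X$ with respect to the $L^\infty$-strong convergence.

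I do not expect any genuine obstacle: the entire content is already contained in Theorem~\ref{thm:main-modular}, and the only two points to keep track of are that strong $L^\infty$-convergence on a set of finite measure forces weak $L^1$-convergence (so the $\liminf$ inequality transfers, passing to the smaller space $X$ merely shrinking the class of admissible test sequences), and that the recovery sequence supplied by Theorem~\ref{thm:main-modular} may be chosen constant and therefore automatically lies in $X$. Equivalently, one may simply remark that the argument is verbatim that of Corollary~\ref{gamma2}, with Theorem~\ref{thm:main-norm} replaced by Theorem~\ref{thm:main-modular} and $F_n, F_\infty$ replaced by $E_n, E_\infty$.
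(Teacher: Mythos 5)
Your proposal is correct and matches the paper's own argument, which simply reduces Corollary~\ref{gamma6} to Theorem~\ref{thm:main-modular} by arguing exactly as in Corollary~\ref{gamma2}: strong $L^\infty$-convergence on the finite-measure set $\Omega$ yields weak $L^1$-convergence (so the $\liminf$ inequality transfers), and the constant sequence serves as recovery sequence. Your write-up is, if anything, slightly more explicit than the paper's one-line proof.
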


\subsection{Proofs of {Theorems}}
 
 {\bf Proof of Theorem \ref{thm:main-norm}}
 The proof is very similar to the one of \cite[Theorem 4.2]{BHH}. The main steps are written for the readers' convenience.

Let $F_n:= F_{\phi_n}$, $n\in\mathbb N$, be the functional in  \eqref{Fphi}, with $\phi=\phi_n$.
To show that
\[
\limsup_{n \to \infty} F_n(u) \le F_\infty(u)
\]
for $u \in L^1(\Omega, \mathbb R^d)$, we assume, without loss of generality, that $F_\infty(u) <+\infty$, 
hence, by Proposition \ref{prop}, it follows that 
\[ 
\lim_{n \to \infty} F_n(u) = \lim_{n \to \infty} \|f(\cdot, u, Du)\|_{\phi_n} = \|f(\cdot, u, Du)\|_{\infty} = F_\infty(u).
\]
We next deal with the $\liminf$-inequality, following line by line \cite[Theorem 4.2]{BHH}, hence we can assume without loss of generality that $\Omega$ is a ball $B$.  
Let $(u_n) \subset L^1(B, \mathbb R^d)$ 
converge weakly to $u \in L^1(B, \mathbb R^d)$.  Without loss of generality, we assume that 
\[ 
\liminf_{n \to \infty} F_n(u_n) = \lim_{n \to \infty} F_n(u_n) = M < +\infty.
\]

Then every subsequence of $(u_n)$ also has limit  $M$ in energy. We will prove that it will converge in any $L^q$ for every $q>\gamma$.
Recall that $p_n\to\infty$. Fix such a $q$ and let $n_0 \in \mathbb N$ be such that
\[
p_n \ge \frac q\gamma \quad\text{and}\quad F_n(u_n) \le M + 1\qquad\text{for all } n \ge n_0.
\]
From $p_n \ge \frac q\gamma$ it follows that $\phi_n$ satisfies \ainc{\frac{q}{\gamma}} with 
the same constant $L$ as in the assumption. By Proposition~\ref{prop:embedding},
\begin{equation*}
\|f(\cdot, u_n, Du_n)\|_{q/\gamma}  
\le  
\underbrace{\big(2L(|\Omega|+c)\big)^\frac\gamma q}_{=:C_q} 
\|f(\cdot, u_n, Du_n)\|_{\phi_n} 
\le
C_1 (M+1).
\end{equation*}
for every $n \ge n_0$, and  $C_q\searrow 1$ as $q\to \infty$.

By this inequality and  ${\bf (H2)}$, it follows:
\[
\|Du_n\|^{\gamma}_{q} 
= 
\big\||Du_n|^\gamma\big\|_{q/\gamma} 
\le \tfrac{1}{\alpha}\|f(\cdot, u_n, Du_n)\|_{q/\gamma}
\le \tfrac{C_1}{\alpha}  (M+1),
\]
hence $(\|Du_n\|_q )$ is bounded in $n$ for every $q\geq \gamma$.
Then, up to a subsequence (possibly depending on $q$), $(Du_n)$ weakly converges to a function $w$ in $L^q(B, \mathbb R^{Nd})$. 
Since $(u_n)$ weakly converges to $u$ in $L^1(B, \mathbb R^d)$, $w$ is the distributional gradient of $u$. 
%
By Poincar\'e inequality where $L^q$-integrability is required only for the gradient, cf. \cite[Section 1.5.2, p. 35]{MazP97},
we obtain that 
\begin{align*}
\|u_n\|_{L^q(B, \mathbb R^d)} 
\le 
c(N, B) \|Du_n\|_{L^q(B, \mathbb R^d)} + |B|^{\frac1q-1} \|u_n\|_{L^1(B, \mathbb R^d)},
\end{align*}
Consequently $(u_n)$ has a weakly convergent subsequence in $W^{1,q}(B, \mathbb R^{d})$.  

The metrizability of weak topology  $W^{1,q}(B;\mathbb R^d)$ in compact sets allows us to apply Urysohn property, hence to  conclude that
the entire sequence $(u_n)$ is such that $ u_n \rightharpoonup u$ in $W^{1,q}(B;\mathbb R^d)$. 
Let $(\nu_x)_{x \in B}$ be the family of $W^{1,q}$-gradient Young measures generated by $(D u_n)$ for every $q>\min\{\gamma, 1\}$ 
(with barycenter $Du(x)$). Arguing as in \cite[Theorem 4.2]{BHH}
\begin{align*}
\liminf_{n \to \infty} F_n(u_n) &
\ge \liminf_{q \to \infty}\bigg( \int_B \int_{\mathbb R^{Nd}} f(x, u(x), \xi)^{\frac q\gamma}\, d\nu_x(\xi)\, dx\bigg)^{\frac{\gamma}{q}}\\
&\ge \esssup_{x \in B} f(x, u(x), Du(x)),
\end{align*}
where in the last inequality we have exploited the fact that $f(x,u,\cdot)$ is ${\rm curl}_{(p > 1)}-Young$ quasiconvex for a.e.\ $x \in \Omega$ and every $u \in \mathbb R^d$ and (ii) in Theorem \ref{RZthm}, taking into account that 
\begin{equation}\label{barycenter}
Du (x)= \int_{\mathbb R^{Nd}}\xi d \nu_{x}(\xi).
\end{equation}

Thus we have 
completed the proof.  
\qed

\noindent {\bf Proof of Corollary \ref{gamma2}} As in the proof of Theorem \ref{thm:main-norm}, the $\Gamma$-$\limsup$ inequality  follows the same arguments as in Theorem \ref{thm:main-norm}. In order to get the  $\Gamma$-$\liminf$ inequality,  it is sufficient to note that 
if $(u_n)\subseteq X$ is a
sequence  $L^\infty$-converging to $u$ in $X$, then  $(u_n)$  weakly $L^q$-converges to $u$ for every $q\geq 1$. As in the proof of Theorem \cite[Theorem 5.3]{EP}, we get that the sequence  $(Du_n)$ weakly converges to $Du$ in $L^{q}(\Omega,\mathbb R^d)$ for every $q>1$. 
 In particular $(u_n)$ converges weakly to $u$ in $W^{1,q}(\Omega,\mathbb R^{Nd})$ for every $q>N$. Then $(Du_n)$ generates a Young measure
$(\nu _x)_{x\in\Omega}$ such that $\nu_x(\mathbb R^{Nd})=1$  and \eqref{barycenter} holds,
for a.e. $x\in\Omega$. Thus the $\Gamma$-$\liminf$ inequality follows by applying Jensen's inequality in Definition \ref{RZdef}  and (iii) in Theorem \ref{RZthm}, in order to get \eqref{Finfinity}.  \QED

 \noindent {\bf Proof of Theorem \ref{thm:main-modular}.} The proofs follows the lines of the proof of \cite[Theorem 4.4]{BHH}, by applying Theorem \ref{thm:main-norm} instead of  \cite[Theorem 4.2]{BHH}.\QED
 
\noindent {\bf Proof of Corollary \ref{gamma6}.}
It is sufficient to argue as in the proof of Corollary \ref{gamma2} ,applying Theorem \ref{thm:main-modular}
to the sequence  $(E_n)$, defined in $X$ by equation \eqref{Ephi} (with $\phi=\phi_n$),
to get that it $\Gamma\hbox{-}$converges to $E_\infty:X \to (0,+\infty)$ with
 respect to  the $L^{\infty}$ strong  convergence. \QED
 \color{black}

As emphasized in \cite{BHH}, the assumptions of Theorem~\ref{thm:main-modular} differ from Theorem~\ref{thm:main-norm}, since the condition $\frac1c \le \phi_n(x,1) \le c$ is not sufficient 
in the latter theorem. To this end we refer to \cite[Example 4.5]{BHH}.

%
%

%

\section{$\mathbf{L}^{p(\cdot)}$- approximation of supremal functionals under no convexity assumptions}\label{pxgamma}


\color{black}
In this section we remove assumption $\mathbf{(H1)}$, i.e. the ${\rm curl}_{p >1}$- Young quasiconvexity assumption on $f:\Omega \times \mathbb R^{Nd}\to \mathbb R$, in its last variable, under the restriction that $f$ in\eqref{Fphi} and \eqref{Ephi} does not depend on the second variable, i.e. $f:\Omega \times \mathbb R^{Nd}\to [0,+\infty)$ and $f(x,\cdot)$ satisfies a growth condition  of type $|\cdot|^\gamma$, also from above, 
and assuming that $\phi_n(x,t):= t^{p_n(x)}$, i.e. the functionals appearing in \eqref{Fphi} and \eqref{Ephi} will be specialized in \eqref{approxseq} and \eqref{approxseqmod}, respectively.
Indeed, the result below 
provides an extension 
 of \cite[Theorem 2.2]{PZ}, to the case of modulars  in the variable exponent Sobolev spaces and with more general growth conditions.

On the other hand, in contrast with \cite{PZ}, we consider Carath\'eodory integrands instead of measurable ones (or normal ones as in the previous section). This is due to the exploited technical tools relying on integral representation of envelopes of integral functionals.  Indeed, the relaxation and representation results available for integral functionals with only measurable integrands in $W^{1,p}$ and $W^{1,p(\cdot)}$ (see \cite{D} and \cite{MM}, respectively)  provide a representation in terms of a generic quasiconvex function (see Definition \ref{qcxdef} below), which  in principle,  in the case $p$ constant, may differ from the one obtained in the case $p$ variable, and, in principle, no comparison among the two integrands is possible. In contrast, in our arguments to obtain a supremal representation by means $L^{p(\cdot)}$-approximation,  we rely in the case $p$ constant, passing from $p(\cdot)$ to a suitable constant exponent $p$.

We also observe that, differently from the results in Section 4, we assume Lipschitz regularity for $\partial \Omega$ and a control on the growth rates, see assumptions \eqref{pn1} and \eqref{pn2} below. Furthermore in Remarks \ref{remQinfty} and \ref{rem55} we discuss the removal of the Lipschitz assumptions and the fact that \eqref{pn2} is not necessary for Theorem \ref{relmodular1}.

Now we recall the notion of quasiconvexity  (see e.g. \cite{D}), that will be used in the sequel.
\begin{defn}\label{qcxdef}
Let $g : \mathbb{R}^{Nd} \rightarrow \mathbb{R}$ be a Borel function and let $Q :=(0; 1)^N$. Then $g$ is said quasiconvex (in the sense of Morrey)  if 
\[
g(\xi)\leq\int_Q g(\xi + D u(x))\,dx 
\]
 for every  and for every  $u \in W^{1,\infty}_0(Q;\mathbb R^d)$, $\xi\in\mathbb{R}^{Nd}$.
 \end{defn}
 
 \color{black}

\color{black}

\begin{thm}\label{relnorm1}
Let $\Omega \subset \mathbb{R}^N$ be a bounded open set with Lipschitz boundary. Let $f: \Omega \times \mathbb{R}^{Nd} \rightarrow [0, + \infty[$ be a Carath\'eodory function satisfying $(\mathbf{H2})$, such that   \beq\label{gammabove}
\exists C >0: f(x,\xi) \leq C(|\xi|^\gamma +1) \hbox{ for a.e. } x \in \Omega  \hbox{ and for every } \xi \in \mathbb R^{Nd}.
\eeq 
Let $F_n:L^{1}(\Omega,\mathbb R^{d})\to  [0, + \infty]$ be the functional defined by 
\beq\label{approxseq}
F_n(u):=\left\{\begin {array}{cl} \displaystyle  \|f(\cdot,Du)\|_{p_n(\cdot)}
&  \hbox{if } \, u\in W^{1,p_n(\cdot)}(\Omega,\mathbb R^{d})\\
+\infty  & \hbox{otherwise,}
\end{array}\right.
\eeq
where $p_n: \Omega \rightarrow [1, + \infty) $  is  a sequence of bounded variable exponents   such that 
\begin{equation}\label{pn1}
\lim_{n\to +\infty}p_n^{-}= +\infty,
\end{equation}

and 
\begin{equation}\label{pn2}
\exists \beta > 0: \frac{p_n^+}{p_n^-} \leq \beta, \hbox{ for every } n \in \mathbb N.
\end{equation}


Then $(F_n)$ sequentially $\Gamma (L^1(\Omega))$-converges with respect to the $L^1(\Omega;\mathbb R^d)$ weak topology, as $n \rightarrow \infty$ to the functional
\begin{equation*}
   F_\infty(u) = 
\begin{cases}
\|{\mathcal Q}_\infty f(\cdot, Du)\|_{\infty}, &\text{if } u \in W^{1,1}_\loc(\Omega, \mathbb R^d), 
\\
\infty, &\text{otherwise},
\end{cases}  
\end{equation*}
where
\begin{equation}\label{Qinftydef}
\mathcal{Q}_{\infty} f(x, \cdot) := \sup_{n \ge 1} (\mathcal{Q} f^n)^{1/n}(x, \cdot),
\end{equation}
being $\mathcal{Q}f^n = \mathcal{Q}(f^n)$ the quasiconvex envelope of $f^n$, i.e. 
\begin{equation}
\label{quasiconvexenvelope}
\mathcal{Q}(f^n)(x, \cdot) := \sup\{h: \mathbb{R}^{Nd} \rightarrow [0,+\infty] : h \text{ quasiconvex and } h \le f^n\}.
\end{equation}

\end{thm}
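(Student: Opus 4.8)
The strategy is to prove the two inequalities of sequential $\Gamma$-convergence separately, reducing in both cases to known $L^p$-approximation results (in particular \cite[Theorem 2.2]{PZ}) by passing from the variable exponent $p_n(\cdot)$ to suitable constant exponents. The key preliminary observation is that $\mathcal Q_\infty f$ inherits the growth bounds: from $(\mathbf{H2})$ and \eqref{gammabove} one has $\alpha|\xi|^\gamma \le \mathcal Q(f^n)(x,\xi)^{1/n}$ (since $\alpha^n|\xi|^{n\gamma}$ is convex, hence quasiconvex, and below $f^n$) and $\mathcal Q(f^n)^{1/n}(x,\xi) \le (C(|\xi|^\gamma+1)^n)^{1/n} = C(|\xi|^\gamma+1)$, so $\mathcal Q_\infty f$ itself satisfies a two-sided $|\cdot|^\gamma$-type bound and is a Carathéodory function; moreover each $\mathcal Q(f^n)(x,\cdot)$ is quasiconvex, hence continuous, and the $x$-measurability passes through the countable supremum.

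\textbf{The $\liminf$-inequality.} Let $u_n \rightharpoonup u$ in $L^1(\Omega;\mathbb R^d)$ with $\liminf_n F_n(u_n) =: M < +\infty$ (otherwise nothing to prove); passing to a subsequence achieving the liminf we may assume $F_n(u_n) \le M+1$ eventually and, since $\Omega$ is bounded with Lipschitz boundary, localize to a ball as in the proof of Theorem~\ref{thm:main-norm}. Fix $q > \gamma$. Using the embedding $L^{p_n(\cdot)}(\Omega)\hookrightarrow L^{q/\gamma}(\Omega)$ valid once $p_n^- \ge q/\gamma$ (Corollary 3.3.4 in \cite{DHHR11}, with embedding constant controlled uniformly thanks to \eqref{pn2} and $|\Omega|<\infty$), together with \eqref{relaztotale}, we bound $\|f(\cdot,Du_n)\|_{q/\gamma}$ and hence, by $(\mathbf{H2})$, $\|Du_n\|_q$ uniformly in $n$; Poincaré then gives $u_n \rightharpoonup u$ in $W^{1,q}(\Omega;\mathbb R^d)$ for all such $q$, so $(Du_n)$ generates a $W^{1,q}$-gradient Young measure $(\nu_x)$ with barycenter $Du(x)$. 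Applying Jensen's inequality for quasiconvex functions against gradient Young measures (valid since $\mathcal Q(f^{n_0})(x,\cdot)$ is quasiconvex with the appropriate growth) and then $\mathcal Q(f^{n_0}) \le f^{n_0}$, one obtains for each fixed integer $n_0\ge 1$, taking $q = n_0\gamma$,
\[
\liminf_{n\to\infty} F_n(u_n) \ge \Big(\int_\Omega \mathcal Q(f^{n_0})(x,Du(x))\,dx\Big)^{1/n_0} \ge \Big(\int_\Omega \mathcal Q(f^{n_0})(x,Du(x))\,dx\Big)^{1/n_0}.
\]
Letting $n_0\to\infty$ and using that $\big(\int_\Omega g^{n_0}\big)^{1/n_0}\to \|g\|_\infty$ for $g\ge 0$ in $L^\infty$, applied to $g(x) = \mathcal Q_\infty f(x,Du(x))$ after a monotone-convergence argument relating $\big(\int (\mathcal Q f^{n_0})(x,Du)\big)^{1/n_0}$ to $\|\mathcal Q_\infty f(\cdot,Du)\|_\infty$, yields $\liminf_n F_n(u_n) \ge \|\mathcal Q_\infty f(\cdot,Du)\|_\infty = F_\infty(u)$.

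\textbf{The $\limsup$-inequality.} Here we exhibit a recovery sequence. Fix $u$ with $F_\infty(u) < +\infty$, so $u\in W^{1,\infty}(\Omega;\mathbb R^d)$ by the lower bound on $\mathcal Q_\infty f$. The idea is to invoke, for each fixed constant exponent $p$, the classical $L^p$-relaxation/approximation representation: the functional $v\mapsto \|f(\cdot,Dv)\|_p$ has, as $p\to\infty$ along integers, a $\Gamma$-limit of supremal type with density $\mathcal Q_\infty f$ — this is exactly (the content behind) \cite[Theorem 2.2]{PZ} for constant exponents. Concretely, for constant exponent $p_n^-$ one has $F_n(u) = \|f(\cdot,Du)\|_{p_n(\cdot)} \le \|f(\cdot,Du)\|_{p_n^+} \cdot (\text{const})$ by the exponent-monotonicity embedding, and one wants $\limsup_n \|f(\cdot,Du)\|_{p_n^+} \le \|\mathcal Q_\infty f(\cdot,Du)\|_\infty$. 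Since here the recovery sequence will not simply be the constant sequence $u_n\equiv u$ unless $f$ is already quasiconvex, one builds, for each $n$, a function $u_n\in W^{1,p_n(\cdot)}(\Omega;\mathbb R^d)$ with $u_n\to u$ in $L^1$ and $\|f(\cdot,Du_n)\|_{p_n^+}$ close to $\|\mathcal Q_\infty f(\cdot,Du)\|_\infty$, using the quasiconvexification at the scale $n_0 \sim p_n^+$: on a fine grid approximate $Du$ by piecewise constants $\xi$, on each cell insert an oscillating perturbation $\varphi$ realizing $\big(\int_Q f^{n_0}(x,\xi+D\varphi)\big)^{1/n_0}$ close to $\mathcal Q(f^{n_0})^{1/n_0}(x,\xi)\le \mathcal Q_\infty f(x,\xi)$, glue by a partition of unity (here the Lipschitz regularity of $\partial\Omega$ and control \eqref{pn2} are used to keep the $L^{p_n(\cdot)}$-norms of the glued gradients under control and to ensure $u_n\in W^{1,p_n(\cdot)}$), and pass to the diagonal. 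A relation of the form \eqref{relaztotale} converts modular estimates into norm estimates uniformly because $p_n^+/p_n^-\le\beta$.

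\textbf{Main obstacle.} The delicate point is the $\limsup$-inequality: constructing the recovery sequence requires simultaneously (i) choosing the quasiconvexification scale $n_0=n_0(n)\to\infty$ compatibly with $p_n^\pm\to\infty$, (ii) controlling the blow-up of the oscillation amplitudes so that the perturbed gradients stay bounded in the moving space $L^{p_n(\cdot)}$ — this is where \eqref{pn2} is essential, as it prevents the ratio $p_n^+/p_n^-$ from destroying the uniformity of the Young-measure/relaxation estimates — and (iii) matching the constant-exponent representation density from \cite[Theorem 2.2]{PZ} with the variable-exponent functional, which as the authors stress forces Carathéodory (not merely measurable) integrands so that the two relaxed densities coincide. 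Handling the gluing near the Lipschitz boundary and the diagonalization $n_0(n)$ cleanly is the technical heart of the argument.
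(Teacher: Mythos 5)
Your $\liminf$ half is essentially sound and parallels the paper's: you unpack, via the embedding $L^{p_n(\cdot)}\hookrightarrow L^{q/\gamma}$, the coercivity $(\mathbf{H2})$, Poincar\'e, gradient Young measures and Jensen's inequality for the quasiconvex envelopes, exactly the constant-exponent lower bound that the paper imports wholesale from \cite[Theorem 2.2]{PZ} (see \eqref{5.8BEZ}); the final passage $\sup_{n_0}\bigl(\int_\Omega \mathcal Q(f^{n_0})(x,Du)\,dx\bigr)^{1/n_0}\ge \|\mathcal Q_\infty f(\cdot,Du)\|_\infty$ does go through using the monotonicity of $p\mapsto \mathcal Q(f^p)^{1/p}$, as in the paper. (Your displayed chain repeats the same term twice, but the intended inequality $\int \mathcal Q(f^{n_0})(x,Du)\le \liminf_n\int f^{n_0}(x,Du_n)$ is clear.)

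The $\limsup$ half, however, has a genuine gap: you announce an explicit oscillating recovery sequence (piecewise-constant approximation of $Du$, insertion of test oscillations realizing $\mathcal Q(f^{n_0})^{1/n_0}$, gluing by partitions of unity, diagonalization $n_0=n_0(n)$) and then explicitly defer the control of the glued gradients in the moving spaces $L^{p_n(\cdot)}$, the boundary gluing, and the choice of scales as ``the technical heart''. Those are precisely the steps that make or break the argument, and they are not carried out; in effect you are attempting to reprove by hand the relaxation representation theorem in $W^{1,p(\cdot)}$. The paper avoids the construction entirely: by \cite[Proposition 6.11]{DM93} the $\Gamma$-$\limsup$ is unchanged if each $F_n$ is replaced by its $L^1$-lower semicontinuous envelope, and \cite[Theorem 4.11]{MM} represents that envelope with the integrand $\mathcal Q(f^{p_n(x)})$ (this is where the Carath\'eodory hypothesis and the Lipschitz boundary enter); the pointwise bound $\mathcal Q(f^{p_n(x)})(x,\xi)\le (\mathcal Q_\infty f)^{p_n(x)}(x,\xi)$ from \cite[Remark 5.1]{PZ} then lets the \emph{constant} sequence $u_n\equiv u$ serve as recovery sequence, after normalizing $\|\mathcal Q_\infty f(\cdot,Du)\|_\infty=1$ and using \eqref{relaztotale} together with \eqref{pn1}--\eqref{pn2} to show that both modular powers $(\cdot)^{1/p_n^\pm}$ tend to $1$. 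You should either invoke these relaxation results or supply the missing construction in full; as written the upper bound is a plan, not a proof.
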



\color{black}
\begin{rem}
\label{remQinfty}

\begin{itemize}
\item[(i)] \textnormal{We underline that, in general $Q_\infty f$ is not level convex, but it is
 a strong Morrey quasiconvex (also called BJW-quasi-level convex) function, see \cite[Section 5]{PZ} for a proof.  
 We stress that this property is a consequence of the lower semiconinuity of the $\Gamma$-limit functional \eqref{Finfinity} (see \cite{BJW} and \cite{RZ}).} 

\item[(ii)] \textnormal{We observe also that $\mathcal Q_\infty$ $f$ is ${\rm curl}-\infty$ quasiconvex (see \cite{AP, AP2, RZ}), i.e.
 \begin{align*}\mathcal Q_\infty f(x,\xi)= &\lim_{p\to +\infty} \inf\left\{\left(\int_Q ({\mathcal Q}_\infty f)^p(x, \xi + D u (y))\,dy\right)^{\tfrac{1}{p}}: u \in W^{1,\infty}_{\rm per} (Q; \mathbb R^N)\right\}\\
=& \lim_{p\to +\infty} (\mathcal Q((\mathcal Q_\infty f)^p)^{\frac{1}{p}}(x,\xi),
\end{align*}in view of \cite[Proposition 4.6, Theorem 4.8]{RZ} (we also refer to \cite{AP} where the ${\rm curl}-\infty$-quasiconvexity is deduced as a consequence of $L^p$-approximation). }

\item[(iii)] \textnormal{For the sake of completeness (again referring to \cite[Theorem 4.8]{RZ}, we also remark that $Q_\infty f(x,\cdot)$ is {\rm curl}-Young-quasiconvex, i.e. it satisfies the inequality in Definition \ref{RZdef} for every $W^{1,\infty}$-gradient Young measure.}

\item[(iv)] \textnormal{We conclude this remark, recalling that we have provided a direct proof of the lower bound in Theorems \ref{relnorm1} and \ref{relmodular1}, since, besides we rely on $L^p$-approximation results of \cite{PZ}, our growth conditions differ (i.e. they are milder) from those considered therein.}

\item[(v)] \textnormal{The boundedness of $\Omega$ and the Lipschitz regularity of $\partial \Omega$ are used only in the proof of the lower bound inequality. They can be removed relying on the results of Section 4, assuming that the constant $\gamma$ in {\bf (H1)} and \eqref{gamma2} is $1$. Indeed the same growth is inherited by $\mathcal {\mathcal Q}_\infty f$, that turns out to be ${\rm curl}_{(p>1)}$-Young quasiconvex (see \cite[(7) of Theorem 4.8]{RZ} for a proof)  and such that ${\mathcal Q}_\infty f \leq f$, hence it suffices to exploit the lower bound inequality of Theorem \ref{relnorm1}.}
 \end{itemize}
 \end{rem}
 
\begin{proof}

The proof will be obtained in two steps, first by proving the lower bound estimate and then the existence of a recovery sequence, in accordance with Proposition \ref{seqcharac}.

Concerning the $\Gamma$-$\liminf$ inequality, let $(u_n) \subset L^1(\Omega; \mathbb{R}^d)$ converge weakly to $u \in L^1(\Omega; \mathbb{R}^d).$ Without loss of generality, we assume that
\[
\liminf_{n \rightarrow \infty} F_n(u_n) = \lim_{n \rightarrow \infty} F_n(u_n) = M < \infty.
\]
For this first step we rely on the results contained in \cite[Theorem 2.2]{PZ} for the case $p(x)\equiv p$ constant, indeed, a careful inspection of the proof of \cite[Theorem 2.2]{PZ}, guarantees that the latter result holds with $f$ satisfying ${\bf (H2)}$ and \eqref{gammabove}
instead of the linear growth condition therein. Furthermore, the Lipschitz regularity of $\partial \Omega$ and Sobolev and Rellich's theorems ensure the validity of the $\Gamma$-convergence result in \cite[Theorem 2.2]{PZ} also with respect to the sequential weak convergence in $L^1(\Omega;\mathbb R^d)$.

In details we observe that in view of the Lipschitz regularity of $\partial \Omega$, ${\bf( H2)}$ and  \eqref{gammabove}, for any constant $p$ sufficiently large, the functionals 

\begin{align*}
I_p(u):=\left\{
\begin{array}{ll}
\displaystyle \left(\int_\Omega f^p(x,D u(x))dx\right)^{1/p} & \hbox{ if }u \in W^{1,p}(\Omega;\mathbb R^d),\\
+ \infty &\hbox{ otherwise in } C(\overline \Omega;\mathbb R^d),
\end{array}
\right.\\
G_p(u):=\left\{
\begin{array}{ll}
\displaystyle \left(\int_\Omega f^p(x, Du(x))dx\right)^{1/p} & \hbox{ if }u \in W^{1,p\gamma}(\Omega;\mathbb R^d),\\
+ \infty &\hbox{ otherwise in } C(\overline \Omega;\mathbb R^d),
\end{array}
\right.
\\ \hbox{ and }
\\
H_p(u):=\left\{
\begin{array}{ll}
\displaystyle \left(\int_\Omega f^p(x, Du(x))dx\right)^{1/p} & \hbox{ if }u \in W^{1,1}(\Omega;\mathbb R^d),\\
+ \infty &\hbox{ otherwise in } C(\overline \Omega;\mathbb R^d),
\end{array}
\right.
\end{align*}
coincide.
For the readers' convenience we refer to \cite{EP} where similar arguments have been used to pass from one convergence to another one in the proof of their $\Gamma$-convergences results.\\
\\
At this point, for all $n \in \mathbb N$, 
let $(u_n)\subset W^{1,p_n^-}(\Omega;\mathbb R^d)$ and
\begin{eqnarray}\label{5.8BEZ}
\supess_{x \in \Omega} {\mathcal Q}_{\infty} f (\cdot, Du(\cdot)) & \le & \liminf_{n\to +\infty}\frac{1}{C_{p_n^-}}\left( \int_{\Omega} \mathcal{Q}(f^{p_n^-}(x, D u_n(x)) dx\right)^{\frac{1}{p_n^-}} \nonumber\\
&\le & \liminf_{n \rightarrow +\infty}\frac{1}{C_{p_n^-}}\left( \int_{\Omega} f^{p_n^-}(x, D u_n(x)) dx\right)^{\frac{1}{p_n^-}} 
\\
&\le & \, \liminf_{n \rightarrow \infty} \|f(\cdot, Du_n) \|_{p_n(\cdot)}. \nonumber
\end{eqnarray}
\color{black} 
where $C_{p_n^-}$ denotes the quantity $\big(2L_n(|\Omega|+c)\big)^\frac{1}{p_n^-}$ appearing in \eqref{Prop3.8} which goes to 1 when $p_n^- \rightarrow \infty.$ We remark that the first inequality is given by \cite[Theorem 2.2]{PZ}, taking into account that the $\Gamma$-convergence result stated therein with respect to the uniform convergence, holds also with respect to the sequential weak $L^1$- convergence, while the second one is obtained by using $
\mathcal{Q}(g) \le g.$ Finally the last inequality is deduced by Proposition \ref{prop:embedding} with the choices $\phi_n(x,t) = t^{p_n(x)}$ and $p = p_n^-:$ moreover we can consider $L_n = L = 1$ in \eqref{Prop3.8} as long as $\phi_n(x,\cdot)^{\frac{1}{p_n^-}}$ is convex, with $\phi_n(x,t)$ satisfying \ainc{q_n}.
taking into account that the convexity assumption applied to $t_1 = t$ and $t_2 = 0$, gives
\begin{equation}
\label{conto-conv}
\phi_n(x,\lambda t)^{\frac{1}{p_n^-}} = \phi_n(x,\lambda t_1 + (1 - \lambda) t_2)^{\frac{1}{p_n^-}} \le \lambda\phi_n(x,t_1)^{\frac{1}{p_n^-}} + (1 - \lambda)\phi_n(x,t_2)^{\frac{1}{p_n^-}} = \lambda \phi_n(x,t)^{\frac{1}{p_n^-}},
\end{equation}
which is equivalent to \ainc{q_n} with $L = 1$ uniformly in $n \in \mathbb{N}.$ 
\color{black}
This implies the $\Gamma-$liminf inequality.
\\
\\
For what concerns the upper bound, we first observe that 
it suffices to consider the case $\|\mathcal{Q}_{\infty}(f(x, D u))\|_{\infty} = 1$, since the general case can be reduced to it by 
the normalization $\tilde{v}:= \frac{v}{\|v\|_{\infty}}$ with $v:=\mathcal{Q}_{\infty}(f(x, D u))$.

In order to estimate $\inf\{\displaystyle \limsup_{n\to +\infty} \|f(x, Du_n(x))\|_{p_n(\cdot)}: u_n \rightharpoonup u \hbox{ in }L^1\}$, 
taking into account \eqref{relaztotale} we exploit the following inequality
\begin{align}\label{BEZub}
& \|f(x, Du_n(x))\|_{p_n(\cdot)} \nonumber \\
&\leq \left [ \max  \left\{  \left( \int_\Omega f^{p_n(x)}(x, Du_n(x))dx\right)^{\frac 1 {p_n^-} } , \left(\int_{\Omega}f^{p_n(x)}(x, Du_n(x))dx\right)^{\frac 1 {p_n^+} } \right \}\right ],
\end{align}
for every $n \in \mathbb N$.

At this point, 
we remark that,  since we are dealing with an upper bound estimate for $\Gamma$-convergence we are entitled to pass from sequences $u_n \rightharpoonup u$ in $L^1(\Omega;\mathbb R^d)$ to sequences $u_n \to u$ in $L^1(\Omega;\mathbb R^d)$. Furthermore by \cite[Proposition 6.11]{DM93}, and by the  fact that the right-hand side of \eqref{BEZub} can be written as $\Phi_n(\int_\Omega f^{p_n(x)} (x, D u_n(x))dx)$, with $\Phi_n:[0,+\infty) \to [0, +\infty)$, defined as
\[\Phi_n(H)= \left\{\begin{array}{ll} H^{\frac{1}{p_n^+}} \hbox{ if }  |H| \leq 1,\\
H^{\frac{1}{p_n^.}} \hbox{ if }  |H| >1\end{array}\right.\]  continuous and increasing, so that
 $\Phi_n$ commutes with the operation of taking the lower semicontinuous envelope, in view of \cite[eq. (6.3) in Proposition 6.16]{DM93},  we are allowed to deal with the relaxed functionals appearing in the right hand side of \eqref{BEZub} 
which, in view of \cite[Theorem 4.11]{MM} leads to 
\begin{align}&\inf\{\limsup_n\|f(x, D u_n(x))\|_{p_n(\cdot)}: u_n \to u \hbox{ in } L^1(\Omega;\mathbb R^d)\} \label{limsupest}\\
\le &\inf\left\{\limsup_n \left [ \max  \left\{  \left( \int_\Omega   f^{p_n(x)}(x, Du_n(x))dx\right)^{\frac 1 {p_n^-} } , \left(\int_{\Omega}f^{p_n(x)}(x, Du_n(x))dx\right)^{\frac 1 {p_n^+} } \right \} \right ]: u_n \to u \hbox{ in } L^1(\Omega;\mathbb R^d)\right\}\nonumber\\
= &\inf\left\{\limsup_n \left [ \max  \left\{  \left( \int_\Omega \mathcal Q( f^{p_n(x)})(x, Du_n(x))dx\right)^{\frac 1 {p_n^-} }, \left(\int_{\Omega}\mathcal Q(f^{p_n(x)})(x, Du_n(x))dx\right)^{\frac 1 {p_n^+} } \right\}  \right ]: u_n \to u \hbox{ in } L^1(\Omega;\mathbb R^d)\right\} \nonumber
\end{align}
where $\mathcal{Q}(f^{p_n(x)})(x,\cdot)$ is defined in \eqref{quasiconvexenvelope}.
\\
\\

On the other hand, \cite[Remark 5.1]{PZ} and \eqref{Qinftydef} ensure that 
\[
\mathcal Q (f^{p_n(x)})(x, Du(x))\leq (\mathcal Q_\infty f)^{p_n(x)}(x, Du(x)),
\] 
hence, it suffices to consider  for any sequence $(u_n)$ converging to u in $L^1(\Omega;\mathbb R^d) $, 
\begin{align*}
\limsup_n \left [ \max  \left\{  \left( \int_\Omega \left((\mathcal Q_\infty f)^{p_n(x)}(x, Du_n(x))\right)dx\right)^{\frac 1 {p_n^-} } , \left(\int_{\Omega}\left((\mathcal Q_\infty  f)^{p_n(x)}(x, Du_n(x))\right)dx\right)^{\frac 1 {p_n^+} } \right\} \right ].
\end{align*}

\color{black}
Since $\|\mathcal{Q}_{\infty}(f(x, D u))\|_{\infty} = 1$, we
have that 
\begin{equation}\label{3.4BEZ}\int_\Omega (\mathcal Q_\infty f)^{p_n^+}(x, D u(x))dx\leq \int_{\Omega}(\mathcal Q_\infty  f)^{p_n(x)}(x, Du(x))dx \leq \mathcal L^N(\Omega).\end{equation}

 Following \cite[(3.4), (3.5) and (3.6)]{EP}, for every $n \in \mathbb N$, 
and, thanks to \eqref{3.4BEZ}, we get that
\begin{align*}1 &= \lim_{n\to +\infty}
(\mathcal L^N(\Omega))^{1/p_n^+}
\geq \limsup_{n\to +\infty}
\left(\int_\Omega ({\mathcal Q_\infty} f)^{p_n(x)}(x, Du (x))dx\right)^{\frac{1}{p_n^+}} \\
& \geq \limsup_{n\to +\infty}
\left(\int_\Omega ({\mathcal Q_\infty} f)^{p_n^+}(x, Du (x))dx\right)^{\frac{1}{p_n^+}} = \|{\mathcal Q_\infty} f(x, D u(x))\|_{\infty}= 1.
\end{align*}
Observing that we can replace $\limsup$
by $\liminf$ we can conclude that the $\limsup$ is a limit and
\begin{equation}\label{prima}
 \lim_{n\to +\infty}
\left(\int_\Omega ({\mathcal Q_\infty} f)^{p_n(x)}(x, Du (x))dx\right)^{\frac{1}{p_n^+}}=1.
\end{equation}


By \eqref{pn2}, we have that $1 \leq \beta_n:= \frac{p_n^+}{p_n^-} \leq \beta$ and 

\begin{equation}\label{seconda}
 \lim_{n\to +\infty}
\left(\int_\Omega ({\mathcal Q_\infty} f)^{p_n(x)}(x, Du (x))dx\right)^{\frac{1}{p_n^-}}= \lim_{n\to +\infty}
\left(\int_\Omega ({\mathcal Q_\infty} f)^{p_n(x)}(x, Du (x))dx\right)^{\frac{\beta_n}{p_n^+}}=1.
\end{equation}
 By \eqref{prima}
 and \eqref{seconda}, 
 we can conclude that
 \begin{align*}\limsup_{n\to +\infty }\max  \left\{  \left( \int_\Omega \left((\mathcal Q_\infty f)^{p_n(x)}(x, Du(x))\right)dx\right)^{\frac 1 {p_n^-} } , \left(\int_{\Omega}\left((\mathcal Q_\infty  f)^{p_n(x)}(x, Du(x))\right)dx\right)^{\frac 1 {p_n^+} } \right\} \\
 =\lim_{n\to +\infty} \max  \left\{  \left( \int_\Omega \left((\mathcal Q_\infty f)^{p_n(x)}(x, Du(x))\right)dx\right)^{\frac 1 {p_n^-} } , \left(\int_{\Omega}\left((\mathcal Q_\infty  f)^{p_n(x)}(x, Du(x))\right)dx\right)^{\frac 1 {p_n^+} } \right\}=1,
 \end{align*}
which, together with \eqref{limsupest}, 
concludes the proof of the upper bound and proves our statement.
\color{black}

\end{proof}

\begin{thm}\label{relmodular1}
Let $\Omega \subset \mathbb{R}^N$ be a bounded open set with Lipschitz boundary. Let $f: \Omega \times \mathbb{R}^{Nd} \rightarrow [0, + \infty[$ be a Carath\'eodory function satisfying $(\mathbf{H2})$ and \eqref{gammabove}.
\\
Let $E_n:L^{1}(\Omega,\mathbb{R}^{d})\to  [0, + \infty]$ be the functional defined by 
\beq\label{approxseqmod}
E_n(u):=\left\{\begin {array}{cl} \displaystyle \ \int_{\Omega} \frac{1}{p_n(x)}  f^{p_n(x)}(x,Du(x))dx
&  \hbox{if } \, u\in W^{1,p_n(\cdot)}(\Omega,\mathbb{R}^{d})\\
+\infty  & \hbox{otherwise,}
\end{array}\right.
\eeq
where $p_n: \Omega \rightarrow [1, + \infty) $  is  a sequence of bounded variable exponents   satisfying \eqref{pn1}. 
\\
Then 
$(E_n)$ $\Gamma$ converges with respect to the $L^1(\Omega;\mathbb R^d)$ weak convergence, as $n \rightarrow \infty$ to the functional  
\begin{equation}
\label{Einfinityinfinity}
  E_\infty(u) = 
\begin{cases}
0, &\text{if } u \in W^{1,1}_\loc(\Omega, \mathbb R^d), \text{ and }|{\mathcal Q}_\infty f(\cdot, Du)|\le 1 \text{ a.e.} \\
\infty, &\text{otherwise},
\end{cases}  
\end{equation}
where $\mathcal{Q}_{\infty} f(x,\cdot)$ has been introduced in \eqref{Qinftydef}.
\end{thm}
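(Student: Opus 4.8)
The plan is to reduce the statement to Theorem \ref{relnorm1} via the standard device relating modular-type functionals $E_n$ to norm-type functionals $F_n$, exactly as in the passage from \cite[Theorem 4.2]{BHH} to \cite[Theorem 4.4]{BHH}, but adapted to the $p(\cdot)$-setting. First I would record that $E_n(u)=\int_\Omega \frac{1}{p_n(x)} f^{p_n(x)}(x,Du(x))\,dx$ and that, pointwise, $\frac{1}{p_n^+}\rho_{p_n(\cdot)}(f(\cdot,Du))\le E_n(u)\le \frac{1}{p_n^-}\rho_{p_n(\cdot)}(f(\cdot,Du))$, so up to the harmless factors $\frac{1}{p_n^\pm}\to 0$ the behavior of $E_n$ is governed by $\rho_{p_n(\cdot)}(f(\cdot,Du))=\int_\Omega f^{p_n(x)}(x,Du(x))\,dx$. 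Using \eqref{relaztotale} this modular is in turn sandwiched between suitable powers of $\|f(\cdot,Du)\|_{p_n(\cdot)}=F_n(u)$, namely $\min\{F_n(u)^{p_n^-},F_n(u)^{p_n^+}\}\le \rho_{p_n(\cdot)}(f(\cdot,Du))\le \max\{F_n(u)^{p_n^-},F_n(u)^{p_n^+}\}$. The target $E_\infty$ from \eqref{Einfinityinfinity} is exactly the functional that is $0$ when $\|\mathcal{Q}_\infty f(\cdot,Du)\|_\infty\le 1$ and $+\infty$ otherwise; since $F_\infty(u)=\|\mathcal{Q}_\infty f(\cdot,Du)\|_\infty$ from Theorem \ref{relnorm1}, the dichotomy is precisely whether $F_\infty(u)\le 1$ or $F_\infty(u)>1$ (and $+\infty$ off $W^{1,1}_{\rm loc}$).

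For the $\Gamma$-$\liminf$ inequality, let $u_n\rightharpoonup u$ in $L^1(\Omega;\mathbb R^d)$ with $\liminf_n E_n(u_n)=\lim_n E_n(u_n)=:M<\infty$; I would need to show $\|\mathcal{Q}_\infty f(\cdot,Du)\|_\infty\le 1$. From $E_n(u_n)\le M+1$ eventually and the lower bound $E_n(u_n)\ge \frac{1}{p_n^+}\rho_{p_n(\cdot)}(f(\cdot,Du_n))$ together with \eqref{pn1} one gets $\rho_{p_n(\cdot)}(f(\cdot,Du_n))\le (M+1)p_n^+$; this does not immediately bound $F_n(u_n)$ because of the possibly large exponent $p_n^+$, so the cleaner route is: suppose for contradiction that $\|\mathcal{Q}_\infty f(\cdot,Du)\|_\infty\ge 1+2\eta$ for some $\eta>0$. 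Running the argument of \eqref{5.8BEZ} from Theorem \ref{relnorm1} (which uses only \textbf{(H2)}, \eqref{gammabove}, Lipschitz regularity of $\partial\Omega$, and \cite[Theorem 2.2]{PZ} together with $\mathcal{Q}(g)\le g$ and Proposition \ref{prop:embedding}) gives $1+2\eta\le \liminf_n\|f(\cdot,Du_n)\|_{p_n(\cdot)}$, hence $\|f(\cdot,Du_n)\|_{p_n(\cdot)}\ge 1+\eta$ for large $n$, so by the unit ball property type estimate \eqref{relaztotale}, $\rho_{p_n(\cdot)}(f(\cdot,Du_n))\ge (1+\eta)^{p_n^-}$; then $E_n(u_n)\ge \frac{1}{p_n^+}(1+\eta)^{p_n^-}\ge \frac{1}{p_n^+}(1+\eta)^{p_n^+/\beta}$ is not available here since \eqref{pn2} is not assumed in this theorem — instead I would keep $E_n(u_n)\ge \frac{1}{p_n^+}(1+\eta)^{p_n^-}$ and use that $\frac{1}{p_n^+}(1+\eta)^{p_n^-}\to+\infty$ only if $p_n^-$ grows fast enough relative to $p_n^+$; to avoid needing \eqref{pn2}, observe (cf.\ Remark \ref{rem55}) that for the $\liminf$ one may replace $\frac{1}{p_n^+}$ by noting $E_n(u_n)\ge \int_{\{f(\cdot,Du_n)\ge 1+\eta/2\}}\frac{1}{p_n(x)}(1+\tfrac\eta2)^{p_n(x)}dx$ and on that set $\frac{1}{p_n(x)}(1+\tfrac\eta2)^{p_n(x)}\to+\infty$ uniformly, while the set has positive measure bounded below (since $\|f(\cdot,Du_n)\|_{p_n(\cdot)}\ge 1+\eta$ forces the modular, hence the measure of a large-value set, to stay away from $0$), yielding $E_n(u_n)\to+\infty$, a contradiction.

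For the $\Gamma$-$\limsup$ inequality I would take the constant recovery sequence $u_n\equiv u$ (reducing first to $u\in W^{1,1}_{\rm loc}$ with $\|\mathcal{Q}_\infty f(\cdot,Du)\|_\infty\le 1$, the only case where $E_\infty(u)<\infty$). Then exactly as in the upper bound of Theorem \ref{relnorm1}, using \cite[Remark 5.1]{PZ}, \eqref{Qinftydef} and \cite[Theorem 4.11]{MM} to relax, $\limsup_n E_n(u)\le \limsup_n \frac{1}{p_n^-}\int_\Omega \mathcal{Q}(f^{p_n(x)})(x,Du(x))\,dx\le \limsup_n \frac{1}{p_n^-}\int_\Omega (\mathcal{Q}_\infty f)^{p_n(x)}(x,Du(x))\,dx\le \limsup_n \frac{\mathcal{L}^N(\Omega)+\int_\Omega(\mathcal Q_\infty f)^{p_n(x)}(x,Du(x))dx}{p_n^-}$, and since $(\mathcal Q_\infty f)^{p_n(x)}\le \max\{1,(\mathcal Q_\infty f)^{p_n^+}\}$ with $\|\mathcal Q_\infty f(\cdot,Du)\|_\infty\le 1$ gives $\int_\Omega(\mathcal Q_\infty f)^{p_n(x)}(x,Du(x))dx\le \mathcal{L}^N(\Omega)$, hence $\limsup_n E_n(u)\le \limsup_n \frac{2\mathcal{L}^N(\Omega)}{p_n^-}=0=E_\infty(u)$ by \eqref{pn1}. (To treat the relaxation on $L^1$ rather than on $C(\overline\Omega)$ one argues as in Theorem \ref{relnorm1}, via \cite[Proposition 6.11, Proposition 6.16]{DM93} and the coincidence of $I_p$, $G_p$, $H_p$ there.) The main obstacle is the $\liminf$ part: getting the contradiction without invoking \eqref{pn2}, which is why one passes through the measure of the super-level set $\{f(\cdot,Du_n)\ge 1+\eta/2\}$ rather than through the full modular; I would make this quantitative using \eqref{relaztotale} to bound that measure below in terms of $\|f(\cdot,Du_n)\|_{p_n(\cdot)}$ and $p_n^+$, which is precisely the delicate estimate.
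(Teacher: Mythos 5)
Your $\Gamma$-$\limsup$ argument (constant recovery sequence, relaxation via \cite[Theorem 4.11]{MM}, monotonicity of $\mathcal{Q}(f^p)^{1/p}$ in $p$, the bound $(\mathcal Q_\infty f)^{p_n(x)}\le 1$ a.e., and $1/p_n^-\to 0$) is correct and is essentially the paper's proof. The problem is the $\Gamma$-$\liminf$ part, where you yourself flag the missing step, and that step does not hold as stated. From $\|f(\cdot,Du_n)\|_{p_n(\cdot)}\ge 1+\eta$ you can conclude (via \eqref{relaztotale}) that $\rho_{p_n(\cdot)}(f(\cdot,Du_n))\ge (1+\eta)^{p_n^-}$, but a large modular does \emph{not} force the superlevel set $\{f(\cdot,Du_n)\ge 1+\eta/2\}$ to have measure bounded below: the mass of $\int_\Omega f^{p_n(x)}(x,Du_n)\,dx$ can concentrate on a set of measure $\varepsilon_n\to 0$ on which $f(\cdot,Du_n)$ is huge, so $\int_{\{f\ge 1+\eta/2\}}\frac{1}{p_n(x)}(1+\eta/2)^{p_n(x)}\,dx$ can stay bounded (even tend to $0$) while the modular blows up. Since without \eqref{pn2} the crude bound $E_n(u_n)\ge \frac{1}{p_n^+}(1+\eta)^{p_n^-}$ is useless (as you correctly observe), your contradiction argument does not close.

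The paper avoids the problem by never passing through the modular of $t^{p_n(x)}$ at all. It sets $\varphi_n(x,t):=t^{p_n(x)}/p_n(x)$, so that $E_n(u_n)=\rho_{\varphi_n}(f(\cdot,Du_n))$ \emph{exactly}, notes that $\varphi_n$ satisfies \ainc{p_n^-} with constant $L=1$ (by convexity of $\varphi_n^{1/p_n^-}$, cf. \eqref{conto-conv}), and invokes the norm--modular inequality \cite[Corollary 3.2.10]{HarH19} for the Luxemburg quasinorm $\|\cdot\|_{\varphi_n}$ to get
\begin{equation*}
\|f(\cdot,Du_n)\|_{\varphi_n}\le \max\Big\{\Big(\int_\Omega \tfrac{1}{p_n(x)}f^{p_n(x)}(x,Du_n(x))\,dx\Big)^{1/p_n^-},\,1\Big\}\le \max\big\{(M+1)^{1/p_n^-},1\big\}\longrightarrow 1 ,
\end{equation*}
using only \eqref{pn1}. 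It then runs the chain \eqref{5.8BEZ} with $\|\cdot\|_{\varphi_n}$ in place of $\|\cdot\|_{p_n(\cdot)}$ (Proposition \ref{prop:embedding} applies to $\varphi_n$ with $p=p_n^-$) to conclude $\supess_{x\in\Omega}\mathcal Q_\infty f(x,Du(x))\le \liminf_n\|f(\cdot,Du_n)\|_{\varphi_n}\le 1$, which is precisely $E_\infty(u)=0$. In short: the missing idea is to treat $E_n$ as the modular of the correct weak $\Phi$-function $\varphi_n$ and exploit the unit-ball/norm--modular machinery for $\varphi_n$, rather than trying to extract quantitative information from $\rho_{p_n(\cdot)}$, which cannot control $E_n$ without \eqref{pn2}.
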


\begin{proof}
Concerning the $\Gamma-$liminf inequality, let $(u_n) \subset L^1(\Omega; \mathbb{R}^d)$ converge weakly to $u \in L^1(\Omega; \mathbb{R}^d).$ Without loss of generality, we assume that
\[
\liminf_{n \rightarrow \infty} E_n(u_n) = \lim_{n \rightarrow \infty} E_n(u_n) = M < \infty.
\]
Define
\[
\varphi_n(x,t) := \frac{t^{p_n(x)}}{p_n(x)} 
\]
By the right hand side of the inequality contained in \cite[Corollary 3.2.10]{HarH19}, with $\varphi$  which satisfy condition $(aInc)_{p_n^-}$ and with the constant $a$ replaced by the constant $L_n$ given by condition $(aInc)_{p_n^-}$, see 
\eqref{conto-conv},
with $\phi_n(x, t) = {\varphi_n}(x,t)$ and $q_n = p_n^-$, we get
\[
\|f(\cdot, Du_n) \|_{\varphi_n} \le \, \max \left \{ \left (\int_{\Omega} \frac{1}{p_n(x)} f^{p_n(x)}(x, D u_n(x)) \, dx \right )^{1/p_n^-}, 1 \right \}
\]
and hence
\[
\limsup_{n \rightarrow \infty} \|f(\cdot, Du_n) \|_{\varphi_n}  \le \, 1.
\]
For all $n \in \mathbb N$, 
let $(u_n)\subset W^{1,p_n^-}(\Omega;\mathbb R^d)$.

By \eqref{5.8BEZ}, we get
\begin{eqnarray}
\supess_{x \in \Omega} {\mathcal Q}_{\infty} f (\cdot, Du(\cdot)) 
\leq \liminf_{n \rightarrow \infty} \|f(\cdot, Du_n) \|_{\varphi_n}  \le \, 1. \nonumber
\end{eqnarray}
\\

In this way we are able to conclude that
\[ \esssup_{x \in \Omega} {\mathcal Q}_{\infty}f(x, Du(x))\le 1,\] which proves the lower bound taking into account  definition \eqref{Einfinityinfinity}.

Now, we prove the $\Gamma$-$\limsup$ inequality.\\
Without loss of generality we can assume that $E_{\infty}(u) < \infty$, otherwise the thesis is trivial. 
 
%

Arguing as in the proof of Theorem \ref{relnorm1}, by \cite[Proposition 6.11]{DM93}, we can assume that the functionals $E_n$ under consideration coincide with their relaxed ones, i.e. we can assume, in view of \cite[Theorem 4.11]{MM}, that $E_n(u)=\int_{\Omega} \frac{1}{p_n(x)} \mathcal{Q} \left ( f^{p_n(x)} (x, D u(x)) \right ) dx  $ for every $u \in W^{1, p_n(\cdot)}(\Omega;\mathbb R^d)$. 
 Since $\|{\mathcal Q}_{\infty}f(\cdot, Du)\|_{\infty} \le 1$, recalling that for a.e. $x \in \Omega$ and for every $\xi \in \mathbb R^{Nd}$, $\mathcal{Q}(f^p)^{\frac{1}{p}}(x,\xi)$ is increasing in $p$, \\thanks to the definition of $\mathcal{Q}_{\infty}$ \eqref{Qinftydef}, we have 
\[
\limsup_{n \rightarrow \infty} \int_{\Omega} \frac{1}{p_n(x)} \mathcal{Q} \left ( f^{p_n(x)} (x, D u(x)) \right ) dx \le \limsup_{n \rightarrow \infty} \int_{\Omega} \frac{1}{p_n^-}  (\mathcal{Q}_{\infty}f)^{p_n(x)} (x, D u(x)) \, dx = 0,
\]
and thus the desired inequality follows and the proof is concluded.
\end{proof}

\begin{rem}\label{rem55}

\textnormal{Observe that in both Theorems \ref{relnorm1} and \ref{relmodular1}, the proof of the $\Gamma$-$\liminf$ inequality holds for more general $\phi_n(x,t)$, not necessarily of $p_n$-power type, provided that $(aInc)_{p_n}$ holds for the same constant $L$ for all $n$, without imposing \eqref{gammabove},\eqref{pn1} and \eqref{pn2}.}

\textnormal{In view of this last observation, in the case $\Omega$ has Lipschitz boundary, when $f:\Omega \times \mathbb R^{Nd}\to \mathbb R$, as a consequence of Theorems \ref{relnorm1} and \ref{relmodular1}, in view of (iv) in Remark \ref{remQinfty}, and taking into account that the proofs of the upper-bound inequalities in Theorems \ref{thm:main-norm}, \ref{thm:main-modular}, Corollaries \ref{gamma2} and \ref{gamma6} (for which it is sufficient to consider a constant recovery sequence), we could replace in Theorems \ref{thm:main-norm}, \ref{thm:main-modular}, Corollaries \ref{gamma2} and \ref{gamma6} the ${\rm curl}_{(p>1)}$-Young quasiconvexity of $f(x,\cdot)$ by  ${\rm curl}_\infty$-quasiconvexity. This latter assumption is in general weaker than the previous one, see \cite{RZ}.}

\smallskip
\noindent \textnormal{The same observations regarding the Lipschitz continuity of $\partial \Omega$ contained in Remark \eqref{remQinfty}(v), apply to Theorem \ref{relmodular1}.}

\textnormal{We also remark that \eqref{pn1} is not been exploited in the proof of this theorem, in contrast to Theorem \ref{relnorm1}, where it has been used in the proof of the upper bound inequality.}
\end{rem}

{\noindent{\bf Acknowledgments.}
G. B. and E. Z. acknowledge partial support by the INdAM - GNAMPA Project``Composite materials and microstructures''. M. E.  has been partially supported by PRIN 2020 ``Mathematics for industry 4.0 (Math4I4)'' (coordinator P. Ciarletta).
\color{black}
E. Z. acknowledges the received support through Sapienza Progetti d'Ateneo 2022 piccoli ``Asymptotic Analysis for composites, fractured materials and with defects '', and through PRIN 2022 `` Mathematical Modelling of Heterogeneous Systems'' (coordinator E. N. M. Cirillo), CUP B53D23009360006.
The authors were partially supported by the Gruppo Nazionale per l'Analisi Matematica, la Probabilità e le loro Applicazioni (GNAMPA) of the Istituto Nazionale di Alta Matematica (INdAM)  also  through the INdAM - GNAMPA Project ``Prospettive nelle scienze dei materiali: modelli variazionali, analisi asintotica ed omogeneizzazione'', CUP E53C23001670001.

\end{document}